\newcommand{\e}{\mathbf{e}}
\newcommand{\x}{\mathbf{x}}
\newcommand{\y}{\mathbf{y}}
\newcommand{\z}{\mathbf{z}}
\newcommand{\ie}{{i.e., }}
\newtheorem{Definition}{Definition}
\newtheorem{Theorem}{Theorem}
\newtheorem{Lemma}{Lemma}
\newtheorem{Remark}{Remark}
\newtheorem{Corollary}{Corollary}
\begin{document}          
\title{Relationship between Granger non-causality and graph structure of state-space representations}

\author[*]{M. J\'{o}zsa}
\author[**]{M. Petreczky}
\author[*]{M. K. Camlibel}
\affil[*]{Johann Bernoulli Institute for Mathematics and Computer Science, University of Groningen}
\affil[**]{Centre de Recherche en Informatique, Signal et Automatique de Lille (CRIStAL)}

	\maketitle
\section*{Introduction}
A simple example for interconnections of subsystems is a cascade interconnection between two subsystems,when the information goes only from one to the other. A more general but still simple example is a coordinated system structure. Suppose that there are $n$ subsystems, of which $n-1$ are the so called \emph{agents} and one subsystem is the so called \emph{coordinator}. Then in a coordinated system only the coordinator sends information to the agents who do not share information with each other. The cascade interconnection of two subsystems is just the case when $n=2$. In this paper we consider linear stochastic systems with coordinated interconnection. The interconnection is described by the structure of the state-space representation. Our inquiry is whether these systems can be characterized by non-causality conditions on the components of the output process.

In the deterministic case it is well studied that there is always a transformation to obtain coordinated state-space representation of a system \cite{KeRaSch14,Kempker12} which in its system matrices shows the relation between the subsystems. However, it is not straightforward how to extend it to the stochastic case. In this paper we define linear stochastic state-space representation in coordinated form by keeping the structure of the system matrices like in the deterministic case.
We present results on realization theory of linear stochastic systems in coordinated form. More precisely, we present necessary and sufficient conditions for an output process $\y=[\y^T_1,\ldots,\y^T_{n-1},\y^T_n]^T$ to be the output of a minimal linear stochastic system in coordinated form with a forward innovation noise process, such that $\y_1,\ldots,\y_{n-1}$ are the outputs of the first, second, etc. $(n-1)$th agent, and $\y_n$ is the output of the coordinator. We rely on classical stochastic realization theory and on the notion of Granger non-causality. Our results for $n=2$ yield a characterization of Granger non-causality between two processes in terms of the properties of their joint linear state-space representation. 

The results of the paper could be of interest for reverse engineering the network structure of state-space representations 
which arise in system biology and neuroscience, 
\cite{GongalvesRecent2015,Yuan20111230,nordling,julius-cp,sontag_network3,roebroeck2011,sosa2011}. The results could also be useful 
for structure preserving model reduction and possibly for control design of coordinated systems. The first step towards reverse engineering the network structure is to
understand when the observed behaviour could, in principle, be realized by a state-space representation with a specific network structure. 
The same holds for structure preserving model reduction: it is useful to know whether a minimal (in terms of the dimension of states) realization admitting a 
specific network structure is for a certain behaviour. For the design of interconnected systems, understanding the relationship between
the observed behaviour and the network structure could open up the possibility of choosing alternative network structures realizing the same
functionality. The motivation for studying coordinated linear systems is that their network structure
is a simple but natural one, occurring in many applications \cite{JHvanSchuppen2014}. 

The need to understand the relationship between the observed behaviour and the network structure of linear systems is an active research area, see for example  \cite{GongalvesRecent2015,Yuan20111230,nordling}.
However, none of the
cited work addressed linear stochastic systems.  Causality relationship between time series is an  established research topic in econometrics, neuroscience and
control theory. This relationship can be characterized
in terms of the 
network structure of input-output representations
of these processes,  
see \cite{Eichler12,DufRen98,Hsiao82, Caines76,GevAnd82} and the references therein. If there is one agent, then our results can be viewed as a counterparts of the cited
papers for state-space representations. In fact, for $n=2$ Granger causality for state-space representation was studied by using transfer function approach \cite{BarSet15}. In contrast with the results in \cite{BarSet15} we give a state-space characterization for Granger non-causality by constructing it, by choosing a state process for which the system matrices are in specific form. 
The papers \cite{CaDeWy03,CaiWyn07,Caines09} are the closest ones to this paper, they provide necessary and
sufficient conditions for the existence of a linear state-space realization in the so called
conditional orthogonal form. Conditionally orthogonal state-space realizations represent a 
specific subclass of coordinated linear stochastic systems, and the conditions for the existence of such a system
are much stronger than the conditions proposed in this paper. Note that \cite{CaDeWy03,CaiWyn07,Caines09} presented conditions for
existence of conditionally orthogonal state-space representations, but in contrast to this paper, \cite{CaDeWy03,CaiWyn07,Caines09} did not address their minimality. 

Coordinated linear systems for deterministic case were studied in \cite{KeRaSch14,Kempker12,JHvanSchuppen2014}. 
In \cite{KeRaSch14,Kempker12} a general method was presented to transform a system into coordinated form. In \cite{Pambakian11, Kempker12} also Gaussian coordinated systems were studied and their LQG control.
In this paper we deal with linear stochastic systems (not necessarily Gaussian) and we observe the existence of a linear state-space representation in coordinated form in terms of the causal properties of the output processes.

The structure of the paper is the following: in section \ref{GnC} we introduce the results for $n=2$, when besides the coordinator there is one agent. We state that Granger non-causality is equivalent with a Kalman representation in block triangular form with an additional minimum phase property. For the calculation of this representation we formulate two algorithms. Besides, another state-space equivalence is provided, which is in use in the sequel, where the conditions are restricted for the coordinator subsystem.
In section \ref{Coo} we define the coordinated stochastic linear state-space representation of a process. Furthermore, we present our result for the general case when besides the coordinator there are $n-1$ agents. The proof is based on the results for the $n=2$ case and, therefore, it gives a construction and an algorithm for the calculation of a state-space representation in coordinated form. In section \ref{Examples} we provide examples for the algorithms calculating the representations in block triangular and coordinated form. Finally, the proofs of the theorems and lemmas can be found in section \ref{Proofs}.

\subsection*{Preliminaries}
We use the standard notation and terminology of probability theory. All the processes considered in this paper are discrete-time processes, whose values are vectors with real entries. 
The discrete-time axis is the set of integers
$\mathbb{Z}$.

The space of square-integrable random variables forms a Hilbert space ($\mathcal{H}$) with the covariance function as the inner product. The Hilbert space generated by the one-dimensional components of a process $\z$ at each time is denoted by $\mathcal{H}^{\z}$ and, similarly, for a joint process $[\y^T,\z^T]^T$ it is denoted by $\mathcal{H}^{\y,\z}$. In addition, we write  $\mathcal{H}^{\z}_{t}$, $\mathcal{H}^{\z}_{t-}$ and  $\mathcal{H}^{\z}_{t+}$ for the Hilbert spaces generated by the one-dimensional components of $\z(t)$, $\lbrace \z(s)\rbrace_{s=-\infty}^{t-1}$ and $\lbrace \z(s)\rbrace_{s=t}^{\infty}$, respectively. 
The orthogonal projection of $A\subset \mathcal{H}$ onto $B \subset \mathcal{H}$ is written as $E_l[A|B]:=\lbrace E_l[a|B], a \in A \rbrace$. The orthogonality of a process $\y$ to $A\subset \mathcal{H}$ is meant by element-wise and is written as $\y \perp A$. Note that for Gaussian processes the best prediction is the linear prediction and thus the orthogonal projection is equivalent with the conditional expectation. We write the sum of two subspaces $A,B\subseteq \mathcal{H}$ as $A+B$, the orthogonal direct sum of them as $A \oplus B$ and the orthogonal complement of $B$ in $A$ as $A \ominus B$.	
For closed subspaces $A,B,C \subseteq \mathcal{H}$, we say that $A,B$ have a \emph{conditionally trivial intersection}   with respect to  $C$,
denoted by $A\cap B| C=\lbrace 0 \rbrace $,
if 
$$\{ a-E_l[a |C] \mid a \in A\} \cap \{ b - E_l[b | C] \mid b \in B\}=\{0\},$$
i.e. the intersection of the projections of $A$ and $B$  
onto the orthogonal complement of $C$ in $\mathcal{H}$ is the zero subspace. 

\section{Characterization of Granger non-causality}\label{GnC}
The concept of Granger causality between two discrete random processes \cite{Granger63} turned out to be an important and useful tool for time series analysis in neuroscience and economics. In this section we observe Granger non-causality in terms of state-space representation of a process. This notion can be explained the following way: taking a joint process $\y=[\y_1^T, \y_2^T]^T$ we say that $\y_1$ is Granger noncausal for $\y_2$ if for all $k \geq 0$ the best $k$-step linear prediction of $\y_2$ based on the past values of $\y_2$ is the same than based on the past of $\y$. We show that Granger non-causality between $\y_1$ and $\y_2$ is equivalent with a (forward) innovation representation\footnote{In this paper innovation is always meant as forward innovation.} for $\y$ having block triangular system matrices.

From now on we assume that the processes are weakly-stationary, full-rank and purely non-deterministic. The state-space representations of the processes are meant to be stable. Furthermore, $I$ is always meant to be the identity matrix with the appropriate dimension. To begin with, we introduce an abbreviation (ZMSIR) for the processes which have finite-dimensional linear state-space representation and then we define Granger non-causality for these processes. After introducing the Kalman representation of a process we present equivalences of Granger non-causality in linear state-space representation. Based on the proofs (Section \ref{Proofs}) we provide algorithms which calculate a state-space representation with block triangular matrices characterizing Granger non-causality in the output process.

\begin{Definition}[ZMSIR]
	A stochastic process $\z(t) \in \mathbb{R}^k, t \in \mathbb{Z}$ is called zero-mean square-integrable with rational spectrum (abbreviated by ZMSIR), if it is square-integrable, the expectations $E[\z(t)]$ exist and equals zero and the spectral density $f(\omega)$ is a rational function of $e^{-i\omega}$.
\end{Definition}

\begin{Definition}[Granger non-causality]
	Consider a $\y=[\y_1^T, \y_2^T]^T$ ZMSIR process. We say that $\y_1$ is Granger noncausal for $\y_2$, if for all $t,k \in \mathbb{Z}$,
	\[ E_l[\y_2(t+k) \mid \mathcal{H}_{t-}^{\y_2}] = E_l[\y_2(t+k) \mid \mathcal{H}_{t-}^{\y}]. \]
\end{Definition}\vspace{.2cm}
In this work we study linear time-invariant state-space representations of ZMSIR processes which is defined for a $\y$ process as
\begin{align*}
\x(t+1) & =A\x(t)+B\e(t)\\
\y(t) & =C\x(t)+D\e(t)
\end{align*}
where for any $t,k \in \mathbb{Z}$, $k \geq 0$, $E[\e(t)\e^T(t-k-1)]=0$, $E[\e(t)\x^T(t-k)]=0$, \ie $\e(t)$ is white noise and uncorrelated with $\x(t-k)$. We say that this representation is in \textit{innovation form} if $\e(t)=\y(t)-E_l[\y(t)|\mathcal{H}^{\y}_{t-}]$ $\forall t \in \mathbb{Z}$. There is a specific state-space representation in innovation form called the Kalman representation. We say that a linear state-space representation
	\begin{align*}
	\x(t+1) &= A\x(t) +K\e(t)\\
	\y(t) &= B\x(t)+ \e(t)
	\end{align*}
is a \textit{Kalman representation} if $K$ is the Kalman gain, $\e(t)$ is the innovation process of $\y$ and $\x(t)$ is in the space spanned by the past of $\y$, $\mathcal{H}^{\y}_{t-}$. In this section the observed systems occur to have matrices in block triangular form. For this purpose, we introduce a notion for a representations being in block triangular form.
\begin{Definition}
	Consider a $\y=[\y_1^T,\y_2^T]^T$ ZMSIR process with a state-space representation $(A,B,C,D)$. We say that the system is in block triangular form if it can be written as
	\begin{align}\label{Triang}
	\begin{split}
	\begin{bmatrix} \x_1(t+1) \\ \x_2(t+1) \end{bmatrix} &=
	\begin{bmatrix}A_{1,1} & A_{1,2} \\ 0 & A_{2,2} \end{bmatrix}
	\begin{bmatrix} \x_1(t) \\ \x_2(t) \end{bmatrix} + 
	\begin{bmatrix}B_{1,1} & B_{1,2} \\ 0 & B_{2,2} \end{bmatrix} \begin{bmatrix} \e_1(t) \\ \e_2(t) \end{bmatrix} \\ 
	\begin{bmatrix} \y_1(t) \\ \y_2(t) \end{bmatrix} &=
	\begin{bmatrix}C_{1,1} & C_{1,2} \\ 0 & C_{2,2} \end{bmatrix}
	\begin{bmatrix} \x_1(t) \\ \x_2(t) \end{bmatrix} + \begin{bmatrix}D_{1,1} & D_{1,2} \\ 0 & D_{2,2} \end{bmatrix}\begin{bmatrix} \e_1(t) \\ \e_2(t) \end{bmatrix},
	\end{split}
	\end{align}
 where for $i,j=1,2$, $i\leq j$ the submatrices $A_{i,j} \in \mathbb{R}^{p_i \times p_j}$, $B_{i,j} \in \mathbb{R}^{p_i \times q_j}$, $C_{i,j} \in \mathbb{R}^{r_i \times p_j}$, $D_{i,j} \in \mathbb{R}^{r_i \times q_j}$ for some $p_1,q_i,r_i >0 $ and $p_2\geq 0$.
\end{Definition}

The next theorem is about a characterization of Granger non-causality by specifying a Kalman representation to be in block triangular form.

\begin{Theorem} \label{Kalman22}
	Consider a $\y=[\y_1^T,\y_2^T]^T$ ZMSIR process. Then $\y_1$ does not Granger cause $\y_2$ if and only if there exists a minimal Kalman representation in block triangular form
		\begin{align}\label{Km22ab}
		\begin{split}
		\begin{bmatrix} \x_1(t+1) \\ \x_2(t+1) \end{bmatrix} &=
		\underbrace{\begin{bmatrix}A_{1,1} & A_{1,2} \\ 0 & A_{2,2} \end{bmatrix}}_{A}
		\begin{bmatrix} \x_1(t) \\ \x_2(t) \end{bmatrix} + 
		\underbrace{\begin{bmatrix}K_{1,1} & K_{1,2} \\ 0 & K_{2,2} \end{bmatrix}}_{K} \underbrace{\begin{bmatrix} \e_1(t) \\ \e_2(t) \end{bmatrix}}_{\e}  \\ 
		\begin{bmatrix} \y_1(t) \\ \y_2(t) \end{bmatrix} &=
		\underbrace{\begin{bmatrix}C_{1,1} & C_{1,2} \\ 0 & C_{2,2} \end{bmatrix}}_{C}
		\begin{bmatrix} \x_1(t) \\ \x_2(t) \end{bmatrix} + \begin{bmatrix} \e_1(t) \\ \e_2(t) \end{bmatrix},
		\end{split}
		\end{align}
	where the subsystem $(A_{2,2},K_{2,2},C_{2,2},I)$ representing $\y_2$ is minimum phase.
\end{Theorem}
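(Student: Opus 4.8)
The plan is to prove the equivalence in two directions, with the construction of the block triangular representation being the substantive half. For the ``if'' direction, suppose a minimal Kalman representation of the form (\ref{Km22ab}) is given with $(A_{2,2},K_{2,2},C_{2,2},I)$ minimum phase. The block triangular structure makes the second block row autonomous, $\x_2(t+1)=A_{2,2}\x_2(t)+K_{2,2}\e_2(t)$ and $\y_2(t)=C_{2,2}\x_2(t)+\e_2(t)$, so this subsystem is by itself a state-space representation of $\y_2$ driven by $\e_2$. Since the full representation is a Kalman representation we have $\x(t)\in\mathcal{H}^{\y}_{t-}$ and $\e(t)\perp\mathcal{H}^{\y}_{t-}$, so $\e_2(t)=\y_2(t)-E_l[\y_2(t)\mid\mathcal{H}^{\y}_{t-}]$. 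The minimum phase hypothesis lets me run the stable inverse recursion $\x_2(t+1)=(A_{2,2}-K_{2,2}C_{2,2})\x_2(t)+K_{2,2}\y_2(t)$, which shows $\x_2(t)\in\mathcal{H}^{\y_2}_{t-}$ and $\mathcal{H}^{\y_2}_{t-}=\mathcal{H}^{\e_2}_{t-}$, i.e. $\e_2$ is the innovation of $\y_2$ itself. Writing $\y_2(t+k)=C_{2,2}A_{2,2}^{k}\x_2(t)+(\text{terms in future }\e_2)$ and projecting onto $\mathcal{H}^{\y}_{t-}$ and onto $\mathcal{H}^{\y_2}_{t-}$ then gives the same value $C_{2,2}A_{2,2}^{k}\x_2(t)$ in both cases, because the future innovations are orthogonal to both pasts; this is exactly Granger non-causality.

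For the ``only if'' direction I would build the representation. First take the minimal Kalman (innovation) representation $(A_{2,2},K_{2,2},C_{2,2},I)$ of $\y_2$ alone, with state $\x_2$ and innovation $\e_2$; it is minimal and minimum phase by construction and satisfies $\x_2(t)\in\mathcal{H}^{\y_2}_{t-}$. Granger non-causality, i.e. $E_l[\y_2(t+k)\mid\mathcal{H}^{\y_2}_{t-}]=E_l[\y_2(t+k)\mid\mathcal{H}^{\y}_{t-}]$ for all $k\geq0$, yields two facts: at $k=0$ the $\y_2$-block of the joint innovation of $\y$ coincides with $\e_2$, and for general $k$ the predictor space of $\y_2$ is contained in the state space $\mathcal{X}^{\y}_t$ of the minimal Kalman representation of $\y$, so every entry of $\x_2(t)$ lies in $\mathcal{X}^{\y}_t$. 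I would then choose $\x_1(t)$ as any random variables completing $\x_2(t)$ to a basis of $\mathcal{X}^{\y}_t$ and set $\x=[\x_1^T,\x_2^T]^T$. Expressing $\x(t+1)$ and $\y(t)$ in this basis produces a minimal Kalman representation of $\y$ whose second block row is forced to be $(0,A_{2,2})$, $(0,K_{2,2})$, $(0,C_{2,2})$ precisely because $\x_2(t+1)$ and $\y_2(t)$ depend only on $\x_2(t)$ and $\e_2(t)$; this is the block triangular form (\ref{Km22ab}), with the $(2,2)$ subsystem equal to the Kalman representation of $\y_2$ and hence minimum phase.

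The main obstacle I expect is the basis-extension step in the ``only if'' direction: I must verify that the forward shift maps $\mathcal{X}^{\y}_t$ into the span of $\x(t)$ and $\e(t)$ in a way compatible with the already-fixed autonomous dynamics of $\x_2$, so that no $\x_1(t)$ or $\e_1(t)$ terms leak into the second block row. The clean way to secure this is to argue that the $\y_2$-predictor space is an invariant subspace for the state dynamics of $\y$, which is a consequence of Granger non-causality, and that $\x_2$ being a minimal state for $\y_2$ forces the restricted dynamics to agree with $(A_{2,2},K_{2,2},C_{2,2})$; establishing this invariance and the compatibility of the two realizations is the technical heart of the argument. A secondary point is checking that the completed state is genuinely minimal for $\y$, which follows once $\dim\x_1+\dim\x_2=\dim\mathcal{X}^{\y}$ and the chosen vectors form a basis of $\mathcal{X}^{\y}_t$.
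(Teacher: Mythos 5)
Your proof is correct, and the ``if'' direction coincides with the paper's argument (stable inverse of the minimum-phase $(2,2)$ subsystem $\Rightarrow$ $\x_2(t)\in\mathcal{H}^{\y_2}_{t-}$ $\Rightarrow$ $\e_2$ is the innovation of $\y_2$ alone $\Rightarrow$ both $k$-step predictors equal $C_{2,2}A_{2,2}^k\x_2(t)$). The ``only if'' direction, however, takes a genuinely different route. The paper starts from a deterministic factorization $\Lambda_k^{\y}=CA^{k-1}B$ of the output covariances, puts $(\widehat{C}_2,A)$ into Kalman observable/unobservable form, and defines the state concretely as $\x(t)=(PO)^{+}E_l[Y(t)\mid\mathcal{H}^{\y}_{t-}]$ via a structured pseudo-inverse of the stacked observability matrix; the block-triangularity of $A$, $K$, $C$ is then read off from this explicit construction (with the observable case, where $\dim\x_1=0$, treated separately). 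You instead argue geometrically: Granger non-causality gives $E_l[\mathcal{H}^{\y_2}_{t+}\mid\mathcal{H}^{\y_2}_{t-}]=E_l[\mathcal{H}^{\y_2}_{t+}\mid\mathcal{H}^{\y}_{t-}]\subseteq E_l[\mathcal{H}^{\y}_{t+}\mid\mathcal{H}^{\y}_{t-}]$, so the minimal predictor space of $\y_2$ embeds in that of $\y$, and completing the $\y_2$-Kalman state to a basis forces the second block rows to be $(0,A_{2,2})$, $(0,K_{2,2})$, $(0,C_{2,2})$ — the step you flag as the technical heart does go through cleanly, since $\x_2(t+1)=A_{2,2}\x_2(t)+K_{2,2}\e_2(t)$ already holds exactly and $\e_2(t)\perp\mathcal{H}^{\y}_{t-}$ makes the two conditional expectations $E_l[\,\cdot\mid\mathcal{H}^{\y}_{t-}]$ and $E_l[\,\cdot\mid\mathcal{H}^{\e}_{t}]$ pick out exactly those terms, with no $\x_1$ or $\e_1$ leakage. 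The only point you should make explicit is that the basis completion must be chosen in a time-invariant (stationary) way, e.g.\ from fixed linear functionals of the projected future outputs, so that $A$, $K$, $C$ are constant matrices. Your approach is cleaner as an existence proof and handles the degenerate case $\dim\x_1=0$ uniformly; the paper's approach is more cumbersome but directly yields the computational procedure underlying Algorithms 1 and 2 (covariance factorization, observability staircase, DARE), which is why the authors take it.
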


Recall that a MIMO system is \textit{minimum phase} if it has no zeros outside the unit circle which is equivalent with the existence of a stable causal inverse system. The representation in Theorem \ref{Kalman22} is a special innovation representation showing the causal relation between $\y_1$ and $\y_2$. From system theory we know that all minimal linear systems realizing $\y$ in innovation form are isomorphic (\cite[Theorem 6.6.1]{LinPic15}). Therefore, given a minimal state-space representation in innovation form we can transform it into block triangular form if and only if the Granger non-causality condition holds. Correspondingly, given any representation of a process we can calculate a system in innovation form. 

\begin{Corollary}
	Consider a $\y=[\y_1^T,\y_2^T]^T$ ZMSIR process. If $\y_1$ does not Granger cause $\y_2$ then from any $(A,B,C,D)$ state-space representation of $\y$ a Kalman representation in block triangular form can be calculated.
\end{Corollary}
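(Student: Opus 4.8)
The plan is to chain together two facts that are already available: the constructive existence statement of Theorem~\ref{Kalman22}, and the fact recalled just above it (from \cite[Theorem 6.6.1]{LinPic15}) that all minimal state-space representations of $\y$ in innovation form are isomorphic. The corollary then reduces to two constructive steps: (i) from an arbitrary $(A,B,C,D)$ one can effectively compute \emph{some} minimal Kalman representation of $\y$, and (ii) this computed representation can be effectively transformed into a block triangular Kalman representation.

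For step (i) I would start from $(A,B,C,D)$ and pass to the second-order data of $\y$: solve the Lyapunov equation for the state covariance and read off the output covariance sequence (equivalently, the spectral density). Running the classical forward stochastic realization procedure then yields the minimum-variance state estimate: solving the associated algebraic Riccati equation produces the innovation process $\e$ and the Kalman gain, and after discarding any unreachable or unobservable part this gives a minimal Kalman representation $(\hat{A},\hat{K},\hat{C},I)$ of $\y$. This step is entirely standard and does not use the non-causality hypothesis; it only uses that $\y$ is ZMSIR, so that a finite-dimensional innovation representation exists.

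For step (ii) I would bring in the hypothesis through Theorem~\ref{Kalman22}: since $\y_1$ does not Granger cause $\y_2$, there exists a minimal Kalman representation $(A',K',C',I)$ of $\y$ in the block triangular form \eqref{Km22ab}. Both $(\hat{A},\hat{K},\hat{C},I)$ and $(A',K',C',I)$ are minimal representations of $\y$ in innovation form, hence by the isomorphism theorem they are related by a unique invertible state-space transformation $T$, with $\hat{A}=TA'T^{-1}$, $\hat{K}=TK'$ and $\hat{C}=C'T^{-1}$. Applying $T^{-1}$ to the computed representation $(\hat{A},\hat{K},\hat{C},I)$ therefore returns a Kalman representation of $\y$ in block triangular form, as claimed.

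The delicate point is that isomorphism only guarantees the \emph{existence} of $T$, whereas the corollary asserts the representation can be \emph{calculated}; so I expect the main work to be in making $T$ (or the block triangular representation directly) effective. Rather than search for $T$ abstractly, I would re-run, on the computed minimal Kalman representation, the basis construction underlying the proof of Theorem~\ref{Kalman22}: the block triangular structure in \eqref{Km22ab} reflects a splitting of the state space, i.e. of the predictor space inside $\mathcal{H}^{\y}_{t-}$, into the part carried by the past of $\y_2$ alone and a complementary part, and this splitting is computed by the orthogonal projections appearing in that proof (equivalently, by the algorithms announced in the introduction). Once a state basis adapted to this splitting is fixed, the transformed system matrices fall automatically into the form \eqref{Km22ab}, which completes the argument.
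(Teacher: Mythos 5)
Your proposal is correct and follows essentially the same route the paper takes: the paper derives the corollary from the isomorphism of minimal innovation representations (\cite[Theorem 6.6.1]{LinPic15}) together with the constructive proof of Theorem~\ref{Kalman22}, and makes it effective exactly as you describe --- Lyapunov equation for the covariance data, the observability decomposition of $(\widehat{C}_2,A)$ to fix the state splitting, and the DARE for the Kalman gain (this is Algorithm~\ref{algoGrangerABCD}, whose correctness is Lemma~\ref{Algo12}). Your explicit acknowledgement that the abstract isomorphism only gives existence, and that effectiveness comes from re-running the basis construction of the proof of Theorem~\ref{Kalman22}, is precisely the content the paper packages into that algorithm.
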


The necessary part of Theorem \ref{Kalman22} can be easily seen if we consider that Granger non-causality is equivalent with the block triangular form of the Wold decomposition (\cite{CaiCha75}, \cite{Caines76}). Kalman representation of a $\y$ process in terms of non-causality was observed by Barnett and Seth (\cite{BarSet15}). They pointed out that an equivalent condition for the Wold decomposition being in block triangular form, i.e. condition for non-causality, simplifies to the following:
\begin{equation} \label{BarSet}
\left(C(A-KC)^kK\right)_{21}=0, \hspace{.2cm}\forall k\geq 0,
\end{equation}
where $(.)_{21}$ is the left lower block of the matrix and $(A,K,C)$ are the system matrices of a Kalman representation. The main difference between their result and Theorem \ref{Kalman22} is that our statement is about the block triangular structure of the system matrices of a Kalman representation and in the proof we construct the state-space model \eqref{Km22ab} which naturally satisfies \eqref{BarSet}. Next, we present two algorithms to calculate the system matrices $(A,K,C,I)$ in \eqref{Km22ab}. The first takes a minimal state-space representation as its input and transforms it into a block triangular Kalman representation while the second calculates the same system from covariances of the output. Examples are provided thereafter in section \ref{Examples} where we use empirical covariances as the input of the second algorithm.
\begin{algorithm}[H]\vspace{.2cm}
	%\DontPrintSemicolon % Some LaTeX compilers require you to use \dontprintsemicolon    instead
	\textbf{Input} ($A,B,C,D,Q,ind$): minimal state-space representation for $\y$ with noise covariance and an index set for $\y_2$\\
	\textbf{Output} ($A_k,K,C_k$): system matrices of \eqref{Km22ab} in Theorem \ref{Kalman22}\vspace{.1cm}
	\caption{Kalman representation in block triangular form from system matrices}
	\begin{algorithmic}[0] 
		\State \textbf{Step 1} Solve $P=APA^T+BQ_{\e}B^T$ with $Q_{\e}=E[\e(t)\e^T(t)]$ and define $\bar{C}:=CPA^T+DB^T$.
		
		\State \textbf{Step 2} If $(C_{ind,.},A)$ is observable then define the matrices $(A_k,C_k,\bar{C}_k):=(A,C_{ind,.},\bar{C}_{ind,.})$. Otherwise calculate a transformation matrix $T$ such that $(C_{ind,.},A)$ is in Kalman observable form and define the matrices $(A_k,C_k,\bar{C}_k):=(T A T^{-1},C T^{-1},T \bar{C})$. 
		
		\State \textbf{Step 3} Calculate the covariance of the Kalman state process by solving the DARE		
		$$ X = A_k X A_k^T + (\bar{C}_k-A_k X C_k^T)(\Lambda_0-C_k X C_k^{T})^{-1}(\bar{C}_k-A_k X C_k^T)^T.$$		
		
		\State \textbf{Step 4} Define the Kalman gain as $K:=(\bar{C}_k^{T}-A_k X C_k^{T})(\Lambda_0-C_k X C_k^{T})^{-1}$.
		
		\State \textbf{Step 5} If $(C_{ind,.},A)$ was observable then Granger non-causality holds. If not, then Granger non-causality holds if and only if every matrix in $(A_k, K, C_k)$ has block triangular form with appropriate dimensions.
	\end{algorithmic}
	\label{algoGrangerABCD}
\end{algorithm}
Algorithm 2 is based on the covariances of the output process which opens up the possibility to calculate system \eqref{Km22ab} directly from data by using empirical covariances. In this case there is freedom of accuracy in the choice of $M$ and the tolerance for numerical nonzero numbers. Note that $M$ should be larger than equal than the McMillan degree of an innovation representation of $\y$. It is worth to mention that in the proof of Theorem 1 the case when $(C_{ind},A)$ is observable in step 2 of Algorithm 1 turns out to happen exactly when the dimension of $\x_1$ is zero in system \eqref{Km22ab}.

\begin{algorithm}[H]\vspace{.2cm}
	%\DontPrintSemicolon % Some LaTeX compilers require you to use \dontprintsemicolon    instead
	\textbf{Input} ($\Lambda_k,ind$): Markov parameters of $\y$ and an index set for $\y_2$\\
	\textbf{Output} ($A_k,K,C_k$): system matrices of \eqref{Km22ab} in Theorem \ref{Kalman22}\vspace{.1cm}
	\caption{Kalman representation in block triangular form from output covariances}
	\begin{algorithmic}[0] 
		\State \textbf{Step 1} Define the Hankel matrix and the shifted Hankel matrix as
		$$H_0=\begin{bmatrix}
		\Lambda_1 & \Lambda_2 & \ldots & \Lambda_M \\
		\Lambda_2 &  \Lambda_3 &\ldots & \Lambda_{M+1} \\
		\vdots & \vdots & & \vdots \\
		\Lambda_M & \Lambda_{M+1} & \ldots & \Lambda_{2M-1}
		\end{bmatrix}, \hspace{.4cm} H_1=\begin{bmatrix}
		\Lambda_2 & \Lambda_3 & \ldots & \Lambda_{M+1} \\
		\Lambda_3 &  \Lambda_4 &\ldots & \Lambda_{M+2} \\
		\vdots & \vdots & & \vdots \\
		\Lambda_{M+1} & \Lambda_{M+2} & \ldots & \Lambda_{2M}
		\end{bmatrix},$$
		where $M\geq n/m$ for $n$ being the (assumed) McMillan degree and $m=\dim(\y)$.
		\State \textbf{Step 2} Take the SVD of the Hankel matrix $H_0=USV^T $ and estimate the state-space dimension $n$ by looking at the number of (numerical) nonzero singular values. Let $(U_n,S_n,V_n)$ be the SVD of the $n$-rank approximation of $H_0$ and define the matrices
		$$A:=S_n^{-1/2}U_n^TH_1V_nS_n^{-1/2}, \hspace{.25cm} C:=(U_nS_n^{1/2})_{1:m,.}, \hspace{.25cm} \bar{C}:=(VS^{1/2})_{1:m,.}.$$
		\State \textbf{Step 3} Step 2-3-4-5 of Algorithm 1.
	\end{algorithmic}
	\label{algoGrangerY}
\end{algorithm}

\begin{Lemma}[Correctness of Algorithm 1 and 2]\label{Algo12}
	Consider a $\y=[\y_1^T,\y_2^T]^T$ ZMSIR process with Markov parameters $\Lambda_k^{\y} $ and a minimal state-space representation $(A,B,C,D,Q)$ where $Q$ is the input covariance. The system $(A_k,K,C_k,I)$ where $(A_k,K,C_k)$ is either the output of Algorithm 1 with input $(A,B,C,D,Q)$ or the output of Algorithm 2 with input $\lbrace \Lambda_k^{\y} \rbrace_{k=1}^{2M}$ ($M \geq n/m$ where $A\in \mathbb{R}^{n \times n}$ and $m=\dim(\y)$) is in block triangular form if and only if $\y_1$ does not Granger cause $\y_2$.
\end{Lemma}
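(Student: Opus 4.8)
The plan is to combine three ingredients: classical stochastic realization theory, which guarantees that the algorithms genuinely compute a minimal Kalman representation of $\y$; the observability decomposition carried out in Step 2, which renders two of the three system matrices block triangular for free; and the Barnett--Seth identity \eqref{BarSet}, which converts Granger non-causality into a condition on the Kalman gain alone.

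First I would verify that both algorithms output a minimal Kalman representation $(A_k,K,C_k,I)$ of $\y$. In Algorithm 1 the Lyapunov equation of Step 1 returns the state covariance $P$ of the given minimal realization and $\bar C$ is the relevant output--state cross-covariance, so that the Markov parameters factor as $\Lambda_k^{\y}=CA^{k-1}\bar C^{T}$ for $k\ge1$; Step 2 merely changes the state basis, preserving these covariance data while placing $(C_{ind,.},A)$ in observability-decomposed form; the stabilizing solution $X$ of the DARE in Step 3 is the covariance of the minimal forward Kalman state, and the gain of Step 4 is then the Kalman gain, so that $A_k-KC_k$ is stable. In Algorithm 2 the Ho--Kalman factorization $H_0=U_nS_nV_n^{T}$, $H_1$ produces a minimal triple $(A,C,\bar C)$ realizing the same sequence $\{\Lambda_k^{\y}\}$, after which it runs Steps 2--5 of Algorithm 1; hence both algorithms feed equivalent data into a common routine. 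I would treat this as standard realization theory and not belabour it.

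The structural observation is that Step 2 fixes the coordinates so that the rows of $C_k$ indexed by $\y_2$ read $[\,0\ \ C_{2,2}\,]$, the pair $(C_{2,2},A_{2,2})$ is observable, and both $A_k$ and $C_k$ are automatically block triangular (the $\y_2$-unobservable, hence $A_k$-invariant, subspace sits in the first state block). Thus the output is in block triangular form if and only if the Kalman gain has vanishing lower-left block $K_{2,1}=0$ -- when $(C_{ind,.},A)$ is observable the first block is empty and this holds vacuously, matching Step 5 -- and it remains to show that $K_{2,1}=0$ if and only if $\y_1$ does not Granger cause $\y_2$. For this I would invoke \eqref{BarSet}: because $(A_k,K,C_k,I)$ is a Kalman representation, non-causality is equivalent to $\bigl(C_k(A_k-KC_k)^{j}K\bigr)_{21}=0$ for all $j\ge0$, the $(\cdot)_{21}$ block being the one that maps $\e_1$ into $\y_2$. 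Writing $M:=A_k-KC_k$ and using block triangularity of $A_k,C_k$ one gets $M_{2,1}=-K_{2,1}C_{1,1}$, so the $j=0$ relation already yields $C_{2,2}K_{2,1}=0$; an induction on $j$ then collapses each relation to $C_{2,2}A_{2,2}^{\,j}K_{2,1}=0$, every other term being annihilated by the lower-order identities $C_{2,2}A_{2,2}^{\,i}K_{2,1}=0$ with $i<j$. Since $(C_{2,2},A_{2,2})$ is observable its observability matrix has full column rank, so these identities force $K_{2,1}=0$; the reverse implication is immediate, as $K_{2,1}=0$ makes $M$ and hence $C_kM^{j}K$ block triangular. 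Combining the three steps gives the stated equivalence for both algorithms.

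The hard part will be this last inductive reduction of \eqref{BarSet} to $C_{2,2}A_{2,2}^{\,j}K_{2,1}=0$: one must track the mixed blocks of $M^{j}$ and confirm that every contribution other than $C_{2,2}A_{2,2}^{\,j}K_{2,1}$ is cancelled by the previously established relations, which is exactly where the special forms $M_{2,1}=-K_{2,1}C_{1,1}$ and $C_{2,2}K_{2,1}=0$ are used repeatedly. A secondary point needing care is the realization-theoretic claim that the stabilizing DARE solution of Step 3 is precisely the covariance of the minimal forward Kalman state, so that \eqref{BarSet} applies, and that the Ho--Kalman triple of Algorithm 2 carries the same covariance data as Algorithm 1.
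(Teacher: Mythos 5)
Your route is genuinely different from the paper's. The paper disposes of this lemma in a few lines: it checks that Step 1 of Algorithm \ref{algoGrangerABCD} (resp.\ Steps 1--2 of Algorithm \ref{algoGrangerY}) produces exactly the factorization $(A,C,\bar C)$ of the Markov parameters used in the constructive ``only if'' part of the proof of Theorem \ref{Kalman22}, and that the remaining steps reproduce that construction verbatim, so the equivalence is inherited from Theorem \ref{Kalman22} with no computation on the gain. You instead reduce the whole question to $K_{2,1}=0$ and certify that with the Barnett--Seth identity \eqref{BarSet} plus an induction. The reduction is sound: the observability decomposition of $(\widehat{C}_2,A)$ does make $A_k$ and $C_k$ block triangular for free with $(C_{2,2},A_{2,2})$ observable, the identities $M_{2,1}=-K_{2,1}C_{1,1}$ and $(C_kK)_{21}=C_{2,2}K_{2,1}$ are correct, and the induction does collapse $\bigl(C_k(A_k-KC_k)^{j}K\bigr)_{21}$ to $C_{2,2}(A_{2,2}-K_{2,2}C_{2,2})^{j}K_{2,1}$ modulo the lower-order relations, which forces $K_{2,1}=0$ because output injection preserves observability of $(C_{2,2},A_{2,2})$. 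What your approach buys is an explicit, independently checkable criterion on the gain; what it costs is that \eqref{BarSet} enters as a black box --- the paper only cites that identity and never uses it in any proof, so your argument rests on an external result, whereas the paper's is self-contained given Theorem \ref{Kalman22}.

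There is one concrete error: the observable case. When $(C_{ind,.},A)$ is observable the first \emph{state} block is empty, but $K_{2,1}$ is then the entire $n\times q_1$ block of the Kalman gain multiplying $\e_1$; it is not vacuously zero, and Granger non-causality does not automatically hold. Take $\y_1=w$ white, $\x(t+1)=a\x(t)+w(t)$, $\y_2(t)=\x(t)+v(t)$ with $v\perp w$ white: the pair $(\widehat C_2,A)=(1,a)$ is observable, yet $\y_1$ Granger-causes $\y_2$ and the gain loads on $\e_1$. (This also shows that the first sentence of Step 5 of Algorithm \ref{algoGrangerABCD}, which you cite as support, is itself wrong as stated; the lemma survives because the output system is then not block triangular.) Your own induction, run with $p_1=0$, already gives the correct statement --- $\widehat C_2A^{j}K_{\cdot 1}=0$ for all $j$, hence $K_{\cdot 1}=0$, if and only if non-causality holds --- so the fix is simply to apply the same argument there rather than declaring the condition vacuous.
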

In Theorem \ref{Kalman22} we saw that a Kalman representation in block triangular form with a minimum phase condition characterizes Granger non-causality. However, in the next section the matrix $K$ is not going to be the Kalman gain. Therefore we need a generalized form of Theorem \ref{Kalman22} in the sense that instead of assuming system \eqref{Km22ab} to be a Kalman representation we dispense conditions only for the subsystem representing $\y_2$.

\begin{Theorem} \label{GC22}
	Consider a $\y=[\y_1^T , \y_2^T]^T$ ZMSIR process. Then $\y_1$ does not Granger cause $\y_2$ if and only if there exists a state-space representation of $\y$ in the form of
	\begin{align} \label{GC22ab}
	\begin{split}
	\begin{bmatrix} \x_1(t+1) \\ \x_2(t+1) \end{bmatrix} &=
	\underbrace{\begin{bmatrix}A_{1,1} & A_{1,2} \\ 0 & A_{2,2} \end{bmatrix}}_{A}
	\begin{bmatrix} \x_1(t) \\ \x_2(t) \end{bmatrix} + 
	\underbrace{\begin{bmatrix}K_{1,1} & K_{1,2} \\ 0 & K_{2,2} \end{bmatrix}}_{K} \begin{bmatrix} \e_1(t) \\ \e_2(t) \end{bmatrix}  \\
	\begin{bmatrix} \y_1(t) \\ \y_2(t) \end{bmatrix} &=
	\underbrace{\begin{bmatrix}C_{1,1} & C_{1,2} \\ 0 & C_{2,2} \end{bmatrix}}_{C}
	\begin{bmatrix} \x_1(t) \\ \x_2(t) \end{bmatrix} + \begin{bmatrix} \e_1(t) \\ \e_2(t) \end{bmatrix}, 
	\end{split}
	\end{align}
	such that the following holds:
	\begin{enumerate}[(i)]
		\item \label{GC22i} $\e_2(t)=\y_2(t)-E_l[\y_2(t)|\mathcal{H}^{\y}_{t-}]$;	
		\item \label{GC22ii} the matrix $Q_{\x_2}:=E[\x_2(t)\x_2^T(t)]$ is the minimal positive definite solution of
		\begin{equation} \label{Km22gain}
			\Sigma=A_{2,2}\Sigma A_{2,2}^T+(\bar{C}_{2,2}-A_{2,2}\Sigma C_{2,2}^T)( \Lambda_0^{\y_2}- C_{2,2}\Sigma C_{2,2}^T)^{-1}(\bar{C}_{2,2}-A_{2,2}\Sigma C_{2,2}^T)^T.\footnote{The existence of the inverse comes from the full-rank property of $\y$.}
		\end{equation}
		\item \label{GC22iii} the subsystem representing $\y_2$ is minimal.
	\end{enumerate}
\end{Theorem}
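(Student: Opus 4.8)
The plan is to prove the two implications separately, in both cases exploiting that the block lower-triangular shape of \eqref{GC22ab} makes the $\y_2$-subsystem
\[
\x_2(t+1)=A_{2,2}\x_2(t)+K_{2,2}\e_2(t),\qquad \y_2(t)=C_{2,2}\x_2(t)+\e_2(t)
\]
a self-contained state-space representation of $\y_2$ driven by $\e_2$ alone. The whole statement then reduces to a single claim: conditions (i)--(iii) hold for \eqref{GC22ab} exactly when this $\y_2$-subsystem is the Kalman representation of $\y_2$ and, simultaneously, $\e_2(t)=\y_2(t)-E_l[\y_2(t)\mid\mathcal{H}^{\y}_{t-}]$. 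Since Granger non-causality concerns only the prediction of $\y_2$, which by the decoupling depends on this subsystem alone, both directions of the theorem follow once the claim is established.

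For sufficiency I would start from (ii) and (iii). As $\e(t)$ is white and uncorrelated with the past state, the $\y_2$-subsystem is a forward (causal) realization of $\y_2$, and a direct covariance computation shows that its state covariance $Q_{\x_2}$ necessarily solves the algebraic Riccati equation \eqref{Km22gain}; condition (ii) then singles out the minimal positive definite solution. Invoking classical stochastic realization theory, namely the correspondence between the minimal solution of this equation and the forward predictor space (see \cite{LinPic15}), a minimal forward realization whose state covariance is this minimal solution must be the Kalman representation of $\y_2$; in particular $\x_2(t)\in\mathcal{H}^{\y_2}_{t-}$ and $\e_2(t)=\y_2(t)-E_l[\y_2(t)\mid\mathcal{H}^{\y_2}_{t-}]$. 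Comparing with (i) yields $E_l[\y_2(t)\mid\mathcal{H}^{\y_2}_{t-}]=E_l[\y_2(t)\mid\mathcal{H}^{\y}_{t-}]$. To upgrade this to the $k$-step prediction I would iterate the subsystem,
\[
\y_2(t+k)=C_{2,2}A_{2,2}^{\,k}\x_2(t)+\sum_{j=0}^{k-1}C_{2,2}A_{2,2}^{\,k-1-j}K_{2,2}\e_2(t+j)+\e_2(t+k),
\]
and project onto $\mathcal{H}^{\y}_{t-}$ and onto $\mathcal{H}^{\y_2}_{t-}$. Because $\x_2(t)\in\mathcal{H}^{\y_2}_{t-}\subseteq\mathcal{H}^{\y}_{t-}$ and each $\e_2(t+j)$ with $j\ge 0$ is orthogonal to $\mathcal{H}^{\y}_{t-}$ (condition (i) read at the later time $t+j\ge t$), both projections collapse to $C_{2,2}A_{2,2}^{\,k}\x_2(t)$, which is precisely Granger non-causality.

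For necessity I would invoke Theorem \ref{Kalman22}: if $\y_1$ does not Granger cause $\y_2$, there is a minimal block-triangular Kalman representation \eqref{Km22ab} whose $\y_2$-subsystem $(A_{2,2},K_{2,2},C_{2,2},I)$ is minimum phase, and I would take it as the candidate for \eqref{GC22ab}. Condition (i) is immediate since $\e$ is the innovation of $\y$. Under Granger non-causality $E_l[\y_2(t)\mid\mathcal{H}^{\y}_{t-}]=E_l[\y_2(t)\mid\mathcal{H}^{\y_2}_{t-}]$, so $\e_2$ is the innovation of $\y_2$ and the minimum phase $\y_2$-subsystem is a forward innovation representation of $\y_2$; identifying it, through the construction underlying Theorem \ref{Kalman22} in which $\x_2$ spans the forward predictor space of $\y_2$, with the Kalman representation of $\y_2$ delivers minimality (iii) and, via the same covariance computation, that $Q_{\x_2}$ is the minimal solution of \eqref{Km22gain}, i.e.\ (ii).

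I expect the main obstacle to be the realization-theoretic step shared by both directions: proving that minimality together with the state covariance being the minimal solution of \eqref{Km22gain} forces the $\y_2$-subsystem to be the forward, predictor-space model with $\x_2(t)\in\mathcal{H}^{\y_2}_{t-}$, rather than merely some realization sharing that covariance. This is where the full-rank and purely non-deterministic assumptions are needed, both to keep $\Lambda_0^{\y_2}-C_{2,2}\Sigma C_{2,2}^T$ invertible and to guarantee a well-defined minimal solution. A secondary point, on the necessity side, is to ensure that the $\y_2$-subsystem coming from Theorem \ref{Kalman22} is genuinely minimal as a representation of $\y_2$: observability of $(C_{2,2},A_{2,2})$ does not follow automatically from minimality of the full system because of the coupling through $A_{1,2}$ and $C_{1,2}$, so one must lean on the specific construction in which $\x_2$ is chosen as the forward predictor state of $\y_2$.
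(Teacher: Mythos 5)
Your proposal is correct and follows the same overall architecture as the paper: the direction ``non-causality $\Rightarrow$ representation'' is delegated to the construction behind Theorem \ref{Kalman22}, and the converse hinges on stochastic realization theory for the $\y_2$-subsystem via the minimal solution of \eqref{Km22gain}. The one genuine difference is \emph{how} the converse extracts the equality of predictors from (\ref{GC22i})--(\ref{GC22iii}). You first upgrade (\ref{GC22ii})+(\ref{GC22iii}) to the statement that the $\y_2$-subsystem \emph{is} the predictor-space (Kalman) realization, i.e.\ $\x_2(t)\in\mathcal{H}^{\y_2}_{t-}$ and $\e_2$ is the innovation of $\y_2$, and only then compare with (\ref{GC22i}). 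This is provable --- minimality pins down $\bar{C}_{2,2}$ through the Hankel factorization, hence $E_l[\x_2(t)\mid\mathcal{H}^{\y_2}_{t-}]$ coincides with the predictor state $\x_2^-(t)$, and $Q_{\x_2}=P_-+\mathrm{Cov}(\x_2-\x_2^-)\succeq P_-$ with equality only if $\x_2=\x_2^-$ --- and it is exactly the obstacle you flag, so you should spell out that uniqueness argument rather than only cite \cite{LinPic15}. The paper sidesteps this entirely: it builds the Kalman realization of $\y_2$ abstractly from the minimal Riccati solution, observes that (\ref{GC22ii}) forces $E[\hat{\e}\hat{\e}^T]=E[\e_2\e_2^T]$ (using $\Lambda_0^{\y_2}=C_{2,2}Q_{\x_2}C_{2,2}^T+E[\e_2\e_2^T]$), and concludes that two projections of $\y_2(t)$ onto the \emph{nested} spaces $\mathcal{H}^{\y_2}_{t-}\subset\mathcal{H}^{\y}_{t-}$ with equal error covariance must coincide. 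That trick buys a shorter argument that never needs to locate $\x_2(t)$ inside $\mathcal{H}^{\y_2}_{t-}$; your route buys the stronger structural conclusion that $\x_2$ is the forward predictor state, which is what you then use (together with the orthogonality of $\e_2(t+j)$, $j\ge 0$, to $\mathcal{H}^{\y}_{t-}$ from (\ref{GC22i})) for the $k$-step extension, in the same way the paper does. On the necessity side your treatment matches the paper's, and your remark that observability of $(C_{2,2},A_{2,2})$ must come from the specific Kalman-observability construction in the proof of Theorem \ref{Kalman22}, not from minimality of the joint system, is exactly right.
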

As we mentioned before, this characterization is advantageous in section 3 where we combine different representations into a so-called coordinated form. In fact, we specify only $K_{2,2}$ (there $K_{n,n}$) to be a Kalman gain and $K_{i,j}$ in general comes from the Kalman gain for appropriate subsystem.

\section{Coordinated systems}\label{Coo}
A coordinated system determines the directions of the communication between subsystems. Suppose that there are $n$ subsystems, where one is called the coordinator and every other is called agent. Then the coordinated structure restricts the communication flow in such a way that the coordinator can send information to the other subsystems and no other direction of communication is allowed. 

In this section we introduce stochastic linear state-space representations of a process being in coordinated form. In the light of the previous section we construct a state-space representation for an output process $\y=[\y_1^T,\ldots,\y_{n-1}^T, \y_n^T]^T$ with coordinated system structure, supposing that certain non-causal relations are present in $\y$. By this, necessary and sufficient conditions are provided for the existence of an innovation representation in coordinated form. The conditions are based on the terms of Granger non-causality and conditional Granger non-causality. The proof is constructive, thus, if a $\y$ process satisfies the required non-causality conditions then a stochastic linear state-space representation in coordinated form can be calculated algorithmically. In addition, a condition for the minimality of the constructed system is provided. \\
\begin{Definition}[conditional Granger non-causality]
	Consider a ZMSIR process $\y=[\y_1^T, \y_2^T, \y_3]^T$. We say that $\y_1$ is conditionally Granger noncausal for $\y_2$ with respect to $\y_3$, if for all $t,k \in \mathbb{Z}$,
	\[ E_l[\y_2(t+k) \mid \mathcal{H}_{t-}^{\y_2,\y_3}] = E_l[\y_2(t+k) \mid \mathcal{H}_{t-}^{\y}]. \]
\end{Definition}

\begin{Definition}[Stochastic linear state-space representation in coordinated form]
	A stochastic linear state-space representation of a $\y=[\y_1^T,\dots,\y_{n-1}^T,\y_n^T]^T$ process is in coordinated form if it is written as
	\begin{multicols}{2}
		\resizebox{1.88 \linewidth}{!}
		{\hskip -5 mm
		\begin{minipage}{2\linewidth}
			\begin{align*} 
			\begin{split} 
			\left[ \! \begin{array}{c} \x_1(t+1) \\ \x_2(t+1) \\ \vdots \\ \x_{n-1}(t+1) \\ \hline \x_n(t+1) \end{array}\!\right] \! & \mathord{=} \!
			\underbrace{\left[ \! \begin{array}{cccc|c} A_{1,1} & 0 & \dots & 0 & A_{1,n} \\ 0 & A_{2,2} & \dots & 0 & A_{2,n} \\ \vdots & \vdots & \ddots & \vdots & \vdots\\ 0 & 0 & \dots & A_{n-1,n-1} & A_{n-1,n} \\\hline 0 & 0 & \dots & 0 & A_{n,n}\end{array}\!\right]}_A\!
			\underbrace{\left[ \! \begin{array}{c}  \x_1(t) \\ \x_2(t)  \\ \vdots \\ \x_{n-1}(t)  \\ \hline \x_n(t) \end{array}\!\right]}_{\x(t)}
			\mathord{+} \!
			\underbrace{\left[\! \begin{array}{cccc|c} B_{1,1} & 0 & \dots & 0 & B_{1,n} \\ 0 & B_{2,2} & \dots & 0 & B_{2,n} \\ \vdots & \vdots & \ddots & \vdots & \vdots\\ 0 & 0 & \dots & B_{n-1,n-1} & B_{n-1,n} \\	\hline 0 & 0 & \dots & 0 & B_{n,n}\end{array} \! \right]}_B \!
			\underbrace{\left[ \! \begin{array}{c}  \e_1(t)  \\ \e_2(t)  \\ \vdots \\ \e_{n-1}(t)  \\ \hline \e_n(t)  \end{array}\!\right]}_{\e(t)} \\ 
			\underbrace{\left[ \! \begin{array}{c}  \y_1(t)  \\ \y_2(t)  \\ \vdots \\ \y_{n-1}(t)  \\ \hline \y_n(t)  \end{array}\!\right]}_{\y(t)} & \mathord{=}
			\underbrace{\left[\!\begin{array}{cccc|c} C_{1,1} & 0 & \dots & 0 & C_{1,n} \\ 0 & C_{2,2} & \dots & 0 & C_{2,n} \\ \vdots & \vdots & \ddots & \vdots & \vdots\\ 0 & 0 & \dots & C_{n-1,n-1} & C_{n-1,n} \\	\hline 0 & 0 & \dots & 0 & C_{n,n}\end{array}\! \right]}_C \!
			\left[ \! \begin{array}{c}  \x_1(t)  \\ \x_2(t)  \\ \vdots \\ \x_{n-1}(t)  \\ \hline \x_n(t) \end{array}\!\right] \!  +  \!
			\underbrace{\left[\! \begin{array}{cccc|c} D_{1,1} & 0 & \dots & 0 & D_{1,n} \\ 0 & D_{2,2} & \dots & 0 & D_{2,n} \\ \vdots & \vdots & \ddots & \vdots & \vdots\\ 0 & 0 & \dots & D_{n-1,n-1} & D_{n-1,n} \\	\hline 0 & 0 & \dots & 0 & D_{n,n}\end{array} \! \right]}_D\! \left[ \! \begin{array}{c}  \e_1(t)  \\ \e_2(t)  \\ \vdots \\ \e_{n-1}(t)  \\ \hline \e_n(t)  \end{array}\!\right]\!
			\end{split}	
			\end{align*}
		\end{minipage}}\break 
		\vskip 12.5mm
			\begin{equation}
			\label{coordrep}
			\end{equation}
	\end{multicols}
	where the processes $\x_i,\e_i,\y_i$ take values in the spaces $\mathbb{R}^{p_i}$, $\mathbb{R}^{q_i}$, $\mathbb{R}^{r_i}$, respectively, for some positive integers $p_i,q_i,r_i$ and $A_{i,j} \in \mathbb{R}^{p_i \times p_j}$, $B_{i,j} \in \mathbb{R}^{p_i \times q_j}$, $C_{i,j} \in \mathbb{R}^{r_i \times p_j}$, $D_{i,j} \in \mathbb{R}^{r_i \times q_j}$, $i,j\in \lbrace 1,\ldots,n-1,n\rbrace$. In addition, we say that a system matrix is in coordinated form if it has the same zero structure as in the system matrices above.
\end{Definition}
	
The latter definition is based on the deterministic terminology (\cite{Kempker12},\cite{JHvanSchuppen2014}) and on the definition of Gaussian coordinated systems \cite{Pambakian11, Kempker12}. The term \textit{coordinated} is used because the information flow is restricted in such a way that $\y_i \in \mathcal{H}^{\e_i,\e_n}_{(t+1)-}$ for $i \in \lbrace 1,2,\dots, n-1 \rbrace$ and $\y_n \in \mathcal{H}^{\e_n}_{(t+1)-}$.

In our main result we construct an innovation representation in coordinated form using Granger non-causality conditions. Applying Theorem \ref{Kalman22} for some partitions of the processes which satisfy a Granger non-causality condition we can combine the resulting subsystems into coordinated form. Moreover, a condition for minimality arises from the construction.

\begin{Theorem} \label{GCnn}
	Consider a $\y=[\y_1^T,\dots,\y_{n-1}^T, \y_n^T]^T$ ZMSIR process. Then both
	\begin{enumerate}[(i*)]
		\item \label{GC1ni} $\y_i$ is Granger noncausal for $\y_n$, $i \in \lbrace 1, 2, \ldots, n-1 \rbrace$
		\item \label{GC1nii} $\y_i$ is conditionally Granger noncausal for $\y_j$ with respect to $\y_n$ $i,j \in \lbrace 1, 2, \ldots, n-1 \rbrace$, $i \neq j$
	\end{enumerate}
	if and only if there exists an innovation representation in the form of
	\begin{multicols}{2}
		\resizebox{1.88 \linewidth}{!}
		{\hskip -9 mm
			\begin{minipage}{2\linewidth}
				\begin{align*} 
				\begin{split} 
				\left[ \! \begin{array}{c} \x_1(t+1) \\ \x_2(t+1) \\ \vdots \\ \x_{n-1}(t+1) \\ \hline \x_n(t+1) \end{array}\!\right] \! & \mathord{=} \! \underbrace{\left[ \! \begin{array}{cccc|c} A_{1,1} & 0 & \dots & 0 & A_{1,n} \\ 0 & A_{2,2} & \dots & 0 & A_{2,n} \\ \vdots & \vdots & \ddots & \vdots & \vdots\\ 0 & 0 & \dots & A_{n-1,n-1} & A_{n-1,n} \\\hline 0 & 0 & \dots & 0 & A_{n,n}\end{array}\!\right]}_A\! \underbrace{\left[ \! \begin{array}{c}  \x_1(t) \\ \x_2(t)  \\ \vdots \\ \x_{n-1}(t)  \\ \hline \x_n(t) \end{array}\!\right]}_{\x(t)} \mathord{+} \! \underbrace{\left[\! \begin{array}{cccc|c} B_{1,1} & 0 & \dots & 0 & B_{1,n} \\ 0 & B_{2,2} & \dots & 0 & B_{2,n} \\ \vdots & \vdots & \ddots & \vdots & \vdots\\ 0 & 0 & \dots & B_{n-1,n-1} & B_{n-1,n} \\	\hline 0 & 0 & \dots & 0 & B_{n,n}\end{array} \! \right]}_B \! \underbrace{\left[ \! \begin{array}{c}  \e_1(t)  \\ \e_2(t)  \\ \vdots \\ \e_{n-1}(t)  \\ \hline \e_n(t)  \end{array}\!\right]}_{\e(t)} \\ 
				\underbrace{\left[ \! \begin{array}{c}  \y_1(t)  \\ \y_2(t)  \\ \vdots \\ \y_{n-1}(t)  \\ \hline \y_n(t)  \end{array}\!\right]}_{\y(t)} & \mathord{=}
				\underbrace{\left[\!\begin{array}{cccc|c} C_{1,1} & 0 & \dots & 0 & C_{1,n} \\ 0 & C_{2,2} & \dots & 0 & C_{2,n} \\ \vdots & \vdots & \ddots & \vdots & \vdots\\ 0 & 0 & \dots & C_{n-1,n-1} & C_{n-1,n} \\	\hline 0 & 0 & \dots & 0 & C_{n,n}\end{array}\! \right]}_C \!
				\left[ \! \begin{array}{c}  \x_1(t)  \\ \x_2(t)  \\ \vdots \\ \x_{n-1}(t)  \\ \hline \x_n(t) \end{array}\!\right] \!  +  \! \left[ \! \begin{array}{c}  \e_1(t)  \\ \e_2(t)  \\ \vdots \\ \e_{n-1}(t)  \\ \hline \e_n(t)  \end{array}\!\right]\!, \hspace{0.2cm}
				\end{split}	
				\end{align*}
			\end{minipage}}\break 
			\vskip 12.5mm
			\begin{equation}
			\label{Cor_nn}
			\end{equation}
		\end{multicols}
		where the following holds:
		\begin{enumerate}[(i)]
			\item \label{Thi} $\e_i(t)=\y_i(t)-E_l[\y_i(t)|\mathcal{H}^{\y_i,\y_n}_{t-}]$, $i \in \lbrace 1,2,\ldots,n\rbrace$
			\item \label{Thii} the matrix $Q_{\x_n}:=E[\x_n(t)\x_n^T(t)]$ is the minimal positive definite solution of
			\begin{equation}
			\Sigma=A_{n,n}\Sigma A_{n,n}^T+(\bar{C}_{n,n}-A_{n,n}\Sigma C_{n,n}^T)( \Lambda_0^{\y_n}- C_{n,n}\Sigma C_{n,n}^T)^{-1}(\bar{C}_{n,n}-A_{n,n}\Sigma C_{n,n}^T)^T
			\end{equation}
			\item \label{Thiii} the subsystem representing $\y_n$ is minimal.
		\end{enumerate}
		In addition, if the representation above exists then it is minimal if and only if
		\begin{equation} \label{Thiv}
		{E_l[\mathcal{H}^{\y_i}_{t+}|\mathcal{H}_{t-}^{\y_i,\y_n}] \cap E_l[\mathcal{H}^{\y_j}_{t+}|\mathcal{H}_{t-}^{\y_j,\y_n}] \mid E_l[\mathcal{H}^{\y_n}_{t+}|\mathcal{H}^{\y_n}_{t-}]= \lbrace 0 \rbrace} \hspace{.2cm} i,j \in \lbrace 1, 2, \ldots, n-1 \rbrace, i \neq j.
		\end{equation}
\end{Theorem}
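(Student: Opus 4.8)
The plan is to establish both directions by reducing to the two--process Theorems \ref{Kalman22} and \ref{GC22} applied block-by-block, and to settle minimality through a predictor--space computation. Throughout set $S_m:=\mathcal{H}^{\y_m}_{t-}$ and recall that $\mathcal{H}^\y_{t-}=\sum_{m=1}^n S_m$.

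For sufficiency I would argue constructively. First fix one minimal forward Kalman representation $(A_{n,n},K_{n,n},C_{n,n},I)$ of the coordinator $\y_n$, with $\e_n(t)=\y_n(t)-E_l[\y_n(t)\mid S_n]$ and $\x_n(t)\in S_n$; by uniqueness of minimal innovation representations up to isomorphism (\cite[Theorem 6.6.1]{LinPic15}) this lower block is well defined. For each agent $i$, assumption (i*) lets me invoke Theorem \ref{GC22} on the pair $[\y_i^T,\y_n^T]^T$, and after transporting its $\y_n$--subsystem to the fixed one I obtain a block--triangular representation whose lower block is exactly $(A_{n,n},K_{n,n},C_{n,n})$ and whose agent part is driven only by $\e_i(t)=\y_i(t)-E_l[\y_i(t)\mid \mathcal{H}^{\y_i,\y_n}_{t-}]$ and by $\e_n$. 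Stacking the $n-1$ representations over the common coordinator state produces precisely the zero pattern of \eqref{Cor_nn}, because $\x_i$ never enters the dynamics or output of any $\x_j,\y_j$ with $j\neq i$. The only nontrivial point is that the stacked system is an innovation representation, i.e. that each $\e_i$ is the $i$th component of the innovation of $\y$; this reduces to the identity $E_l[\y_i(t)\mid \mathcal{H}^{\y_i,\y_n}_{t-}]=E_l[\y_i(t)\mid \mathcal{H}^\y_{t-}]$, which I would obtain from (ii*) as follows. The residual $r_i:=\y_i(t)-E_l[\y_i(t)\mid \mathcal{H}^{\y_i,\y_n}_{t-}]$ is orthogonal to $S_i$ and $S_n$ (residual property) and, since (ii*) holds for every ordered pair, to $\mathcal{H}^{\y_i,\y_j,\y_n}_{t-}\supseteq S_j$ for each $j\neq i$; being orthogonal to every summand $S_m$ it is orthogonal to $\mathcal{H}^\y_{t-}=\sum_m S_m$. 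Whiteness of $\e$ and $\e(t)\perp\x(t-k)$ for $k\geq0$ then follow at once from $\x(t)\in\mathcal{H}^\y_{t-}$.

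Necessity I would read directly off \eqref{Cor_nn}. The coordinator rows form the autonomous subsystem $(A_{n,n},K_{n,n},C_{n,n})$, which by conditions (ii)--(iii) is a minimal Kalman representation of $\y_n$; hence $E_l[\y_n(t+k)\mid \mathcal{H}^\y_{t-}]=C_{n,n}A_{n,n}^k\x_n(t)=E_l[\y_n(t+k)\mid S_n]$ for all $k$, which is (i*). For (ii*), since no agent state $\x_i$ feeds the dynamics or output of $\y_j$ with $j\neq i$, the $k$--step predictor of $\y_j$ depends only on $\x_j(t),\x_n(t)\in\mathcal{H}^{\y_j,\y_n}_{t-}$ and on future innovations orthogonal to the past, so enlarging the conditioning by the past of $\y_i$ leaves it unchanged. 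Both statements are the $n=2$ necessity of Theorems \ref{Kalman22}/\ref{GC22} used blockwise.

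For minimality I would compute dimensions. Writing $A_i:=E_l[\mathcal{H}^{\y_i}_{t+}\mid \mathcal{H}^{\y_i,\y_n}_{t-}]$ and $C:=E_l[\mathcal{H}^{\y_n}_{t+}\mid S_n]$, the pairwise minimal representations give $\dim\x_n=\dim C$ and $\dim\x_i=\dim\tilde A_i$, where $\tilde A_i:=\{a-E_l[a\mid C]\mid a\in A_i\}$ is the projection of $A_i$ onto the orthogonal complement of $C$. Using that (i*)--(ii*) hold for every lag $k$, I would show the forward predictor space decomposes as $E_l[\mathcal{H}^\y_{t+}\mid \mathcal{H}^\y_{t-}]=C+\sum_{i=1}^{n-1}A_i$, so that the McMillan degree equals $\dim C+\dim(\sum_i\tilde A_i)$, whereas the constructed state has dimension $\dim C+\sum_i\dim\tilde A_i$. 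Consequently the representation is minimal if and only if the sum $\sum_i\tilde A_i$ is direct, and the content of \eqref{Thiv} is precisely $\tilde A_i\cap\tilde A_j=\{0\}$ for all $i\neq j$. I expect the reverse implication here to be the main obstacle: directness of $\sum_i\tilde A_i$ is in general strictly stronger than pairwise triviality of the intersections, and upgrading the pairwise condition \eqref{Thiv} to joint directness must exploit the coordinated (star) topology --- the fact that agents communicate only through the coordinator, so that after projecting out $C$ their predictor spaces behave like conditionally independent leaves of a tree. Making this precise, rather than the largely mechanical verifications in the other steps, is where the real work lies.
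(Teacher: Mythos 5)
Your overall route coincides with the paper's: for sufficiency the paper's Lemma \ref{Lem} does exactly what you describe --- it applies Theorem \ref{GC22} to each pair $[\y_i^T,\y_n^T]^T$, uses the isomorphism of minimal innovation representations of $\y_n$ to transport every coordinator block onto a common one via matrices $T_i$, and stacks; your orthogonality argument showing that (ii*) upgrades $\e_i(t)=\y_i(t)-E_l[\y_i(t)\mid\mathcal{H}^{\y_i,\y_n}_{t-}]$ to the $i$th component of the innovation of $\y$ is a correct filling-in of a step the paper only asserts. Necessity is likewise read off the zero pattern in both treatments, via the expansion of $\x_i(t)$ in past innovations and projection of the output equation onto $\mathcal{H}^{\y_i,\y_j,\y_n}_{t-}$.

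The genuine gap is the one you flag yourself: you do not prove that \eqref{Thiv} implies minimality, and this is not an optional refinement --- it is half of the final claim of the theorem. Your dimension count correctly reduces the question to whether $\sum_{i}\tilde A_i$ is a direct sum, and your observation that pairwise triviality of intersections is in general strictly weaker than joint directness is a legitimate worry; but stopping at ``this is where the real work lies'' leaves the statement unestablished. The paper closes this direction by invoking the reachability criterion of \cite[Proposition 6.1.1]{LinPic15} (minimality reduces to $\x(t)$ being a basis of $\mathcal{H}^{\x}_t$, since observability and constructibility already follow from the construction in Lemma \ref{Lem}), and by identifying the predictor spaces concretely with the state spaces, namely $E_l[\mathcal{H}^{\y_n}_{t+}\mid\mathcal{H}^{\y_n}_{t-}]=\mathcal{H}^{\x_n}_t$ and $E_l[\mathcal{H}^{\y_i}_{t+}\mid\mathcal{H}^{\y_i,\y_n}_{t-}]+\mathcal{H}^{\x_n}_t=\mathcal{H}^{\x_i,\x_n}_t$, using the left-invertibility of the observability matrices of the minimal pairwise subsystems; \eqref{Thiv} then translates into $(\mathcal{H}^{\x_i}_t\ominus\mathcal{H}^{\x_n}_t)\cap(\mathcal{H}^{\x_j}_t\ominus\mathcal{H}^{\x_n}_t)=\{0\}$ and the dimension count $\dim\mathcal{H}^{\x}_t=\sum_i\dim\mathcal{H}^{\x_i}_t$ is asserted from there. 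To be fair, the paper's own passage from these pairwise trivial intersections to the full orthogonal decomposition of $\mathcal{H}^{\x}_t$ is also stated without detailed justification, so the issue you isolate is real; but a complete proof must either supply that argument (exploiting, as you suggest, that after removing $\mathcal{H}^{\x_n}_t$ the agent state spaces are built from mutually uncorrelated innovations $\e_i$, which is what makes the sum orthogonal rather than merely pairwise disjoint) or find another way to verify reachability of $(A,K)$. As written, your proposal proves the equivalence of (i*)--(ii*) with the existence of \eqref{Cor_nn} but not the minimality criterion.
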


\begin{Remark} \label{RemCGC}
	Conditions (\ref{GC1ni}*) and (\ref{GC1nii}*) imply that $\y_i$ is Granger noncausal for $[\y_j^T,\y_n^T]^T$, $i,j \in \lbrace 1, 2, \ldots, n-1 \rbrace$, $i \neq j$. Apparently, (\ref{GC1nii}*) can be changed to it hence Granger noncausality is enough to be inspected and conditional Granger non-causality can be neglected.
\end{Remark}
Similarly, then in section 2 we provide two algorithms for the calculation of \eqref{Cor_nn} in Theorem \ref{GCnn} with the assumption that (\ref{GC1ni}*) and (\ref{GC1nii}*) hold. Algorithm \ref{algoCoordABCD} takes a state-space representation as its input and transforms it into coordinated form while Algorithm \ref{algoCoordY} calculates the same system from the covariances of the output. Note that by Remark \ref{RemCGC} condition (\ref{GC1ni}*) and (\ref{GC1nii}*) can be verified using Granger non-causality tests.

\begin{algorithm}[H]\vspace{.2cm}
	%\DontPrintSemicolon % Some LaTeX compilers require you to use \dontprintsemicolon    instead
	\textbf{Input} $(A,B,C,D,Q,cut)$: state-space representation for $\y$ with input covariance and an index set for the partitioning $\y=[\y_1^T,\ldots,\y_n^T]^T$\\
	\textbf{Output} $(A_k,K,C_k,I)$: system matrices of the state-space representation in Theorem \ref{GCnn}\vspace{.1cm}
	\caption{Coordinated representation from system matrices}
	\begin{algorithmic}[0] 
		\State \textbf{Step 1} Minimize each subsystem representing $[\y_{cut(i)}^T, \y_{cut(n)}^T]^T$, $i \in \lbrace 1, 2, \dots , n-1 \rbrace$ and call Algorithm \ref{algoGrangerABCD}. Denote its output as $(A_{k_i}, K_i, C_{k_i}, I_i)$.
		
		\State \textbf{Step 2} For $(A_{k_i}, K_i, C_{k_i}, I_i)$ denote the index set for the coordinator part of the state-space with $\x_{c}$ and the coordinator part of the output with $\y_{c}$. Consider the subsystems for $\y_c$ with the matrices $$\left(A_{k_i}(\x_c,\overline{\x}_c), K_i(\x_c,\y_c), C_{k_i}(\y_c,\overline{\x}_c), I_i(\y_c,\y_c)\right)$$
		and define the transformations $S_i := C_{k_i}(\y_c,\overline{\x}_c)^{-1}C_{k_1}(\y_c,\overline{\x}_c)$.\footnote{ Over line denotes complement indexes.} Transform the subsystems as $(A_{tr_i}, K_{tr_i}, C_{tr_i}, I_i):=(S_iA_{k_i}S_i^{-1},K_iS_i^{-1},S_iC_{k_i},I_i)$. 
		
		\State \textbf{Step 3} Merge $(A_{tr_i}, K_{tr_i}, C_{tr_i}, I_i)$ into a system $(A_{k}, K, C_{k}, I)$ in coordinated form by allocating them into the $\left((i,i),(i,n),(n,i),(n,n)\right)$ blocks. Note that the $(n,n)$ block was transformed to be the same for all systems in Step 2 and the $(n,i)$ block is always zero.
		
		\State \textbf{Step 4} Minimality holds if $(A_k,K)$ is controllable.
	\end{algorithmic}
	\label{algoCoordABCD}
\end{algorithm}
\begin{algorithm}[H]\vspace{.2cm}
	\textbf{Input} $(\Lambda_k,cut)$: Markov parameters of the output process $\y$ and an index set for the partitioning $\y=[\y_1^T,\ldots,\y_n^T]^T$\\
	\textbf{Output} $(A_k,K,C_k,I)$: system matrices of the state-space representation in Theorem \ref{GCnn}\vspace{.1cm}
	\caption{Coordinated representation from output covariances}
	\begin{algorithmic}[0] 
		\State \textbf{Step 1}  For $\Lambda_k^{\y_i,\y_n}$, $i \in \lbrace 1, 2, \dots , n-1 \rbrace$ call Algorithm \ref{algoGrangerY} and denote its output as $(A_{k_i}, K_i, C_{k_i}, I_i)$.
		
		\State \textbf{Step 2} Step 2-3-4 of Algorithm \ref{algoCoordABCD}.
	\end{algorithmic}
	\label{algoCoordY}
\end{algorithm}	

\begin{Lemma}[Correctness of Algorithm 3 and 4]\label{Algo34}
	Consider a $\y=[\y_1^T,\dots,\y_{n-1}^T, \y_n^T]^T$ ZMSIR process with Markov parameters $\Lambda_k^{\y} $ and a state-space representation $(A,B,C,D,Q)$ where $Q$ is the input covariance. The system $(A_k,K,C_k,I)$ where $(A_k,K,C_k)$ is either the output of Algorithm 3 with input $(A,B,C,D,Q)$ or the output of Algorithm 4 with input $\lbrace \Lambda_k^{\y} \rbrace_{k=1}^{2M}$ ($M \geq n/m$ where $A\in \mathbb{R}^{n \times n}$ and $m=\dim(\y)$) is in coordinated form with the properties \eqref{Thi}-\eqref{Thii}-\eqref{Thiii} in Theorem \ref{GCnn} if and only if the conditions below hold.
	\begin{enumerate}
	\item $\y_i$ is Granger noncausal for $\y_n$, $i \in \lbrace 1, 2, \ldots, n-1 \rbrace$
	\item $\y_i$ is Granger noncausal for $[\y_j^T,\y_n^T]^T$, $i,j \in \lbrace 1, 2, \ldots, n-1 \rbrace$, $i \neq j$.
	\end{enumerate}
	In addition, $(A_k,K,C_k,I)$ is minimal if and only if \eqref{Thiv} is fulfilled.
\end{Lemma}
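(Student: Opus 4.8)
The plan is to reduce the correctness of Algorithm \ref{algoCoordABCD} and Algorithm \ref{algoCoordY} to the two-agent results of Section \ref{GnC} together with Theorem \ref{GCnn}, so that the Lemma becomes an algorithmic reformulation of the constructive proof of Theorem \ref{GCnn}. The first observation is that, by Remark \ref{RemCGC}, conditions 1 and 2 of the Lemma are exactly the reformulation of the hypotheses (\ref{GC1ni}*) and (\ref{GC1nii}*) of Theorem \ref{GCnn}: condition 1 is (\ref{GC1ni}*) verbatim, and condition 2 is the equivalent of (\ref{GC1nii}*) provided by the Remark. Hence it suffices to show that the system $(A_k,K,C_k,I)$ produced by the algorithm coincides, up to the per-block basis choices made in Step 2, with a representation of the form \eqref{Cor_nn} satisfying \eqref{Thi}--\eqref{Thiii} precisely when these hypotheses hold, and to identify its minimality with \eqref{Thiv}.

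For the direction ``conditions $\Rightarrow$ coordinated form'' I would argue step by step. In Step 1 each pair $[\y_{cut(i)}^T,\y_{cut(n)}^T]^T$ satisfies Granger non-causality by condition 1, so Lemma \ref{Algo12} guarantees that the call to Algorithm \ref{algoGrangerABCD} (resp.\ Algorithm \ref{algoGrangerY}) returns a block triangular Kalman representation $(A_{k_i},K_i,C_{k_i},I_i)$ whose lower-right subsystem is a minimal innovation (Kalman) representation of the coordinator output $\y_n$. Because all $n-1$ coordinator subsystems realize the same process $\y_n$ minimally and in innovation form, they are pairwise isomorphic by the uniqueness of minimal innovation representations (\cite[Theorem 6.6.1]{LinPic15}); the transformations $S_i$ of Step 2 are, by construction, exactly these state-space isomorphisms, chosen so that every coordinator block is mapped onto the one of the first subsystem. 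After this alignment the $(n,n)$-blocks agree and the $(n,i)$-blocks vanish, so the merge in Step 3 assembles a well-defined system whose system matrices have the zero pattern of coordinated form.

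It then remains to verify that the merged system realizes the joint process $\y$ and inherits properties \eqref{Thi}--\eqref{Thiii}. Properties \eqref{Thii} and \eqref{Thiii} transfer directly from the coordinator subsystem by Theorem \ref{Kalman22} and Theorem \ref{GC22} (its state covariance solves the relevant DARE and the subsystem is minimal), and \eqref{Thi} for $i=n$ holds since $\e_n$ is the innovation of $\y_n$. The essential point is that condition 2, namely $\y_i$ Granger noncausal for $[\y_j^T,\y_n^T]^T$, forces $E_l[\y_i(t)\mid\mathcal{H}^{\y_i,\y_n}_{t-}]=E_l[\y_i(t)\mid\mathcal{H}^{\y}_{t-}]$, so that the per-pair innovation $\e_i=\y_i-E_l[\y_i\mid\mathcal{H}^{\y_i,\y_n}_{t-}]$ is also the $i$-th block of the joint innovation of $\y$; this is what yields \eqref{Thi} for every $i$ and decouples the agent dynamics, so that stacking the block triangular subsystems produces a representation of $\y$ and not merely of its marginals. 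This argument mirrors the proof of Theorem \ref{GCnn}.

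For the converse, if the algorithm output is in coordinated form satisfying \eqref{Thi}--\eqref{Thiii}, then it is a representation of the type treated in Theorem \ref{GCnn}, so the ``only if'' part of that theorem yields (\ref{GC1ni}*) and (\ref{GC1nii}*), equivalently conditions 1 and 2; contrapositively, by Lemma \ref{Algo12}, if some $\y_i$ causes $\y_n$ then the corresponding sub-call output is not block triangular and the merge cannot be in coordinated form. Finally, for minimality, each coordinator-agent subsystem is minimal and hence observable, and I would show that the coordinated merge preserves observability, so that minimality of $(A_k,K,C_k,I)$ reduces to controllability of $(A_k,K)$, as asserted in Step 4. Identifying this controllability with the conditional-trivial-intersection condition \eqref{Thiv} is where I expect the main obstacle to lie: one must relate the reachable subspace of the merged $(A_k,K)$ to the sum of the agent predictor spaces modulo the coordinator and show that loss of controllability occurs exactly when the conditional intersection in \eqref{Thiv} is nontrivial. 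Since Theorem \ref{GCnn} already establishes minimality $\Leftrightarrow$ \eqref{Thiv} for representations of the form \eqref{Cor_nn}, this equivalence can be imported once the output has been identified with such a representation, and the remaining work is to check that the basis changes of Step 2 leave minimality unaffected.
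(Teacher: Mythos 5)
Your proposal is correct and follows essentially the same route as the paper: both reduce Step 1 of the algorithms to Lemma \ref{Algo12}, identify the Step 2 transformations $S_i$ with the isomorphisms between the minimal coordinator subsystems used in the proof of Lemma \ref{Lem}, and import the equivalence with conditions 1--2 and the minimality characterization \eqref{Thiv} from the constructive proof of Theorem \ref{GCnn}. Your write-up is in fact more explicit than the paper's own (rather terse) proof, in particular on why condition 2 makes the per-pair innovations coincide with the blocks of the joint innovation.
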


\section{Examples for the algorithms}\label{Examples}

\textbf{Examples for algorithm 1 and 2:} We define a state-space representation of a $\y=[\y_1^T, \y_2^T ,\y_3^T ]^T$ process $(A,B,C,I)$ with noise covariance $Q$ as given below.\\
		\resizebox{1 \linewidth}{!}
		{\begin{minipage}{1\linewidth}
			\begin{align*} 	
			&A,B,C,Q :=\\[.5em]
			& \left[\begin{array}{ccc|cc} 0.45 & 0.11 & 0.07 & 0.09 & 0.43\\ 0.6 & 0.02 & 0.64 & 0.08 & 0.27\\ 0.27 & 0.14 & 0.52 & 0.47 & 0.44\\ \hline 0 & 0 & 0 & 0.27 & 0.45\\ 0 & 0 & 0 & 0.44 & 0.2 \end{array}\right],
			\left[\begin{array}{cc|c} 0.55 & 0.7 & 0.42\\ 0.74 & 0.03 & 0.69\\ 0.39 & 0.27 & 0.31\\\hline 0 & 0 & 0.91\\ 0 & 0 & 0.33 \end{array}\right],
			\left[\begin{array}{ccc|cc} 0.43 & 0.79 & 0.44 & 0.75 & 0.65\\ 0.38 & 0.18 & 0.64 & 0.27 & 0.16\\ \hline 0 & 0 & 0 & 0.67 & 0.11 \end{array}\right],
			\left[\begin{array}{cc|c} 1.0 & 0.15 & 0.2\\ 0.15 & 1.0 & 0.27\\\hline 0.2 & 0.27 & 1.0 \end{array}\right].
			\end{align*}
		\end{minipage}}\vspace{.3cm}\\	
		For the system matrices we generated random matrices and rounded them up to two digits. Furthermore, the appropriate blocks were changed to zero and the eigenvalues of the $A$ matrix were reduced to obtain a stable system. The output of Algorithm \ref{algoGrangerABCD} with input $(A,B,C,Q,[4,5])$ is given as	\\
		\resizebox{1 \linewidth}{!}
		{\begin{minipage}{1\linewidth}
		\begin{align*} 	
			&A_k,K,C_k = \\[.5em]
			&\left[\begin{array}{ccc|cc} -0.12 & 0.22 & 0 & -0.23 & -0.15\\ 0.13 & 0.78 & -0.1 & -0.9 & -0.44\\ 0.22 & -0.13 & 0.33 & 0.03 & 0.04\\ \hline 0 & 0 & 0 & -0.23 & -0.75\\ 0 & 0 & 0 & 0.02 & 0.7 \end{array}\right],
			\left[\begin{array}{cc|c} -0.08 & 0.49 & 0.3\\ -0.11 & -0.17 & 2.32\\ 0.24 & 0.06 & 0.04\\ \hline 0 & 0 & 2.35\\ 0 & 0 & -0.99 \end{array}\right],
			\left[\begin{array}{ccc|cc} -1.42 & -6.2 & 0.9 & 5.78 & -2.99\\ -0.63 & -4.18 & 0.39 & 3.76 & -1.84\\ \hline 0 & 0 & 0 & 0.08 & -0.46 \end{array}\right].
		\end{align*}
		\end{minipage}}\vspace{.3cm}\\
		We can see that the Kalman representation calculated by Algorithm \ref{algoGrangerABCD} is in block triangular form. Correspondingly, $[\y_1^T, \y_2^T]^T$ does not Granger cause $\y_3$. Note that for this example the noise covariance of the resulting system, thus the covariance of the innovation process of $\y$, equals $Q$. This is because in any case when the transfer matrix has stable inverse and the $D$ matrix is the identity the noise is in fact the innovation process of the output process. The output system of Algorithm \ref{algoGrangerY} with input $(\lbrace\Lambda_k^{\y}\rbrace_{k=0}^{M},[4,5])$, where $\Lambda_k^{\y}$s are the empirical covariances and $M$ is efficiently large, is the estimation of the system $(A_k,K,C_k,I)$ coming from Algorithm \ref{algoGrangerABCD}. Calculating the empirical covariances from $10^6$ simulations we obtained the following results: \\
		\resizebox{1 \linewidth}{!}
		{\begin{minipage}{1\linewidth}
				\begin{align*} 
			&\tilde{A}_k,\tilde{K},\tilde{C}_k  =\\[.5em]
			&\left[\begin{array}{ccc|cc} 0.79 & 0.34 & -0.02 & -0.91 & -0.46\\ 0.2 & -0.1 & -0.12 & -0.22 & -0.06\\ -0.15 & 0.06 & 0.38 & 0.38 & 0.18\\ \hline 0.07 & -0.01 & 0 & -0.31 & -0.78\\ 0.01 & 0 & 0 & 0.01 & 0.69 \end{array}\right],
			\left[\begin{array}{cc|c} -0.2 & 0.02 & 2.14\\ 0.05 & -0.5 & 0.82\\ -0.17 & -0.17 & -0.9\\ \hline 0.02 & 0.01 & 2.5\\ 0 & 0 & -1.0 \end{array}\right],
			\left[\begin{array}{ccc|cc} -6.14 & -1.51 & 1.5 & 5.88 & -2.84\\ -3.93 & -1.33 & 1.1 & 3.84 & -1.72\\ \hline 0 & 0 & 0 & 0.07 & -0.47 \end{array}\right].
		\end{align*}
		\end{minipage}}\vspace{.3cm}\\
		The algorithms are sensitive to the accuracy of the Markov parameters and also for disturbances of the system matrices. However, the statistical analysis of the resulting system form the algorithms in the presence of disturbance is beyond the scope of this paper. \\
	%We compared several results of Algorithm \ref{algoGrangerY}  on Figure \ref{Stat1} for empirical covariances calculated from different number of simulations. On $y$-axis we plot the average of $50$ calculations for the distances from independent data between the output system of Algorithm \ref{algoGrangerABCD} and the output system of Algorithm \ref{algoGrangerY}. On $x$-axis we change the length of the simulations, thus the length of the sample of $\y$. 
			
	\textbf{Examples for algorithm 3 and 4:} Similarly, then in the previous example we define a state-space representation of a $\y=[\y_1^T, \y_2^T ,\y_3^T ]^T$ process $(A,B,C,I)$ with noise covariance $Q$ as given below. Note that the matrices in the two examples are the same apart from the zero blocks. \vspace{.3cm}\\
	\resizebox{1 \linewidth}{!}
	{\begin{minipage}{1\linewidth}
			\begin{align*} 	
			&A,B,C,Q :=\\[.5em]
			& \left[\begin{array}{c|cc|cc} 0.45 & 0 & 0 & 0.09 & 0.43\\ \hline 0 & 0.02 & 0.64 & 0.08 & 0.27\\ 0 & 0.14 & 0.52 & 0.47 & 0.44\\ \hline 0 & 0 & 0 & 0.27 & 0.45\\ 0 & 0 & 0 & 0.44 & 0.2 \end{array}\right],
			\left[\begin{array}{c|c|c} 0.55 & 0 & 0.42\\ \hline 0 & 0.03 & 0.69\\ 0 & 0.27 & 0.31\\ \hline 0 & 0 & 0.91\\ 0 & 0 & 0.33 \end{array}\right],
			\left[\begin{array}{c|cc|cc} 0.43 & 0 & 0 & 0.75 & 0.65\\ \hline 0 & 0.18 & 0.64 & 0.27 & 0.16\\ \hline 0 & 0 & 0 & 0.67 & 0.11 \end{array}\right],
			\left[\begin{array}{c|c|c} 1.0 & 0.15 & 0.2\\\hline 0.15 & 1.0 & 0.27\\ \hline 0.2 & 0.27 & 1.0 \end{array}\right].
			\end{align*}
		\end{minipage}}\vspace{.3cm}\\				
	The output of Algorithm \ref{algoCoordABCD} with input $(A,B,C,Q,[4,5])$ is given as\vspace{.3cm}\\
	\resizebox{1 \linewidth}{!}
	{\begin{minipage}{1\linewidth}
			\begin{align*} 	
			&A_k,K,C_k = \\[.5em]
			&\left[\begin{array}{c|cc|cc} 0.45 & 0 & 0 & -0.22 & -0.69\\\hline 0 & -0.12 & -0.1 & 0.02 & -0.1\\ 0 & -0.03 & 0.66 & 0.35 & 0.76\\\hline 0 & 0 & 0 & 0.57 & -0.22\\ 0 & 0 & 0 & -0.38 & -0.1 \end{array}\right],
			\left[\begin{array}{c|c|c} 0.33 & 0 & 1.14\\\hline 0 & -0.06 & 1.18\\ 0 & -0.11 & 17.37\\\hline 0 & 0 & 2.74\\ 0 & 0 & -17.11 \end{array}\right],
			\left[\begin{array}{c|cc|cc} 0.73 & 0 & 0 & 1.28 & -1.13\\\hline 0 & 0.17 & -1.76 & 0.15 & -1.78\\\hline 0 & 0 & 0 & 0.06 & -0.03 \end{array}\right].
			\end{align*}
		\end{minipage}}\vspace{.3cm}\\
	From the resulting system of Algorithm \ref{algoCoordABCD} we immediately can deduce that $[\y_1^T, \y_2^T]^T$ does not Granger cause $\y_3$ and it also holds that $\y_1$ and $\y_2$ does not Granger cause each other. In addition, the system $(A_k,K,C_k,I)$ is minimal. The output of Algorithm \ref{algoGrangerY} with input $(\Lambda_k,[4,5])$, where $\Lambda_k$ is an empirical covariance calculated from $10^6$ simulations, is given as\\
	\resizebox{1 \linewidth}{!}
	{\begin{minipage}{1\linewidth}
			\begin{align*} 
			&\tilde{A}_k,\tilde{K},\tilde{C}_k  =\\[.5em]
			&\left[\begin{array}{c|cc|cc} 0.44 & 0 & 0 & -0.15 & -0.71\\ \hline 0 & 0.53 & -0.38 & 0.25 & 0.74\\ 0 & -0.39 & 0.14 & -0.21 & -0.47\\\hline 0 & -0.06 & 0 & 0.58 & -0.27\\ -0.01 & -0.13 & 0 & -0.36 & -0.22 \end{array}\right],
			\left[\begin{array}{c|c|c} 0.33 & 0 & 1.39\\\hline 0 & -0.07 & 9.92\\ 0 & 0.12 & -9.01\\\hline 0 & 0.02 & 4.17\\ 0 & 0.03 & -13.1 \end{array}\right],
			\left[\begin{array}{c|cc|cc} 0.72 & 0 & 0 & 1.26 & -1.16\\\hline 0 & -1.55 & 1.0 & 0.15 & -1.86\\\hline 0 & 0 & 0 & 0.06 & -0.03 \end{array}\right].
			\end{align*}
	\end{minipage}}\vspace{.3cm}\\					
	As for the previous example, the noise covariance corresponding to the innovation process of $\y$ equals $Q$.
	%We compared several results of Algorithm \ref{algoGrangerY}  on Figure \ref{Stat1} for empirical covariances calculated from different number of simulations. On $y$-axis we plot the average of $50$ calculations for the distances from independent data between the output system of Algorithm \ref{algoGrangerABCD} and the output system of Algorithm \ref{algoGrangerY}. On $x$-axis we change the length of the simulations, thus the length of the sample of $\y$. The distance between the systems were calculated based on $H_2$-norm of the difference of the transfer matrices.
\section{Technical proofs} \label{Proofs}

\begin{proof}[Proof of Theorem \ref{Kalman22}]
	
	\textbf{If}
	Assuming $\y$ has a Kalman realization of the form
		\begin{align} \label{Km22abP}
		\begin{split}
		\begin{bmatrix} \x_1(t+1) \\ \x_2(t+1) \end{bmatrix} &=
		\underbrace{\begin{bmatrix}A_{1,1} & A_{1,2} \\ 0 & A_{2,2} \end{bmatrix}}_{A}
		\begin{bmatrix} \x_1(t) \\ \x_2(t) \end{bmatrix} + 
		\underbrace{\begin{bmatrix}K_{1,1} & K_{1,2} \\ 0 & K_{2,2} \end{bmatrix}}_{K} \underbrace{\begin{bmatrix} \e_1(t) \\ \e_2(t) \end{bmatrix}}_{\e}  \\
		\begin{bmatrix} \y_1(t) \\ \y_2(t) \end{bmatrix} &=
		\underbrace{\begin{bmatrix}C_{1,1} & C_{1,2} \\ 0 & C_{2,2} \end{bmatrix}}_{C}
		\begin{bmatrix} \x_1(t) \\ \x_2(t) \end{bmatrix} + \begin{bmatrix} \e_1(t) \\ \e_2(t) \end{bmatrix}. 
		\end{split}
		\end{align}
	where $(A_{2,2},K_{2,2},C_{2,2},I)$ is minimum phase we first show that $\e_2$ is in fact the innovation process of $\y_2$, \ie
	\begin{equation}\label{e2P}
	\e_2(t)=\y_2(t)-E_l[\y_2(t)|\mathcal{H}^{\y}_{t-}]=\y_2(t)-E_l[\y_2(t)|\mathcal{H}^{\y_2}_{t-}].
	\end{equation}
	The minimum phase assumption implies that the subsystem representing $\y_2$ has a causal stable inverse system such as 
	\begin{align*}
	\z(t+1)&=\tilde{A}\z(t) +\tilde{B}\y_2(t) \\
	\e_2(t)& =\tilde{C}\x_2(t) +\tilde{D}\y_2(t),
	\end{align*}
	from which we can deduce that $\e_2(t) \in \mathcal{H}^{\y_2}_{(t+1)-}$. Note that $A_{2,2}$ is stable, since $A$ is stable thus the equations \eqref{Km22abP} provide us $\x_2(t),\y_2(t)\in \mathcal{H}^{\e_2}_{(t+1)-}$. Consequently, we obtain that $\mathcal{H}^{\y_2}_{t-} = \mathcal{H}^{\e_2}_{t-}$. It follows then that $\x_2(t)\in \mathcal{H}^{\y_2}_{t-} \subset \mathcal{H}^{\y}_{t-}$. If we take now the projection of the output equation of $\y_2$ in \eqref{Km22abP} onto $\mathcal{H}^{\y_2}_{t-}$ and$ \mathcal{H}^{\y}_{t-}$ then in both cases the right hand side becomes $C_{2,2}\x_2(t)$. Therefore $E_l[\y_2(t)|\mathcal{H}^{\y_2}_{t-}]=E_l[\y_2(t)|\mathcal{H}^{\y}_{t-}]$ which gives \eqref{e2P}.	
	Next, we show that 
	\begin{equation} \label{k-stepP}
	E_l[\y_2(t+1) \mid \mathcal{H}^{\y}_{t-}]=E_l[\y_2(t+1) \mid \mathcal{H}^{\y_2}_{t-}] \hspace{.2cm} \forall t,k \in \mathbb{Z}, k \geq 0.
	\end{equation}
	From \eqref{Km22abP} we obtain that
	\[ \y_2(t+k)=\e_2(t+k)+ \sum_{j=1}^{\infty} C_{2,2}A^{j-1}_{2,2}K_{2,2}\e_2(t+k-j). \]
	Notice that $\e_2(t+j)$ is orthogonal to $\mathcal{H}_{t-}^{\y}$ for $j \ge 0$ since
	$\e_2(t+j)=\y_2(t+j)-E_l[\y_2(t+j) \mid  \mathcal{H}_{(t+j)-}^{\y}]$. Also, $\e_2(t-j-1)$ belongs to $\mathcal{H}_{t-}^{\y}$.
	Consequently, for any $k \geq 0$
	$$E_l[\y_2(t+k)\mid \mathcal{H}_{t-}^{\y}]=\sum_{j=1}^{\infty}C_{2,2}A^{k+j-1}_{2,2}K_{2,2}\e_2(t-j).$$
	On the other hand $\e_2(t+j)$ is orthogonal to $\mathcal{H}_{t-}^{\y_2}$ since $\e_2(t+j)$ is orthogonal to $\mathcal{H}_{t-}^{\y} \supseteq \mathcal{H}_{t-}^{\y_2}$.	Moreover, $\e_2(t-j-1)$ belongs to $\mathcal{H}_{t-}^{\y_2}$ because $\e_2(t)$ was shown to be the innovation process of $\y_2(t)$. Therefore, the equation \eqref{k-stepP} for the $k$-step prediction follows.
	 
	\textbf{Only if}
	To begin with, based on realization theory since $\y$ is ZMSIR there exists a linear deterministic realization
	$(A,B,C,R) \in \mathbb{R}^{n \times n} \times \mathbb{R}^{p \times n} \times \mathbb{R}^{n \times p} \times \mathbb{R}^{p \times p}$, for $p:=\dim(\y(t))$ and some $n \ge 1$ such that $(A,C)$ is observable, $(A,B)$ is controllable, $A$ stable and 
	\[ \forall k \ge 1: \Lambda_k^{\y}=E[\y(t+k)\y^T(t)]=CA^{k-1}B, \hspace{.3cm} \Lambda_0^{\y}=R. \]
	Consider the decomposition $C=\begin{bmatrix} \widehat{C}_1 \\ \widehat{C}_2 \end{bmatrix}$, $I=\begin{bmatrix} I_1 \\ I_2 \end{bmatrix}$ where $(\widehat{C}_2, I_2) \in \mathbb{R}^{p_2 \times n}\times \mathbb{R}^{p_2 \times p_2}$, $p_2:=\dim(\y_2(t))$. First we discuss the case when  $(\widehat{C}_2,A)$ is a non-observable pair. We discuss the observable case at the end of the proof because it can be carried out the same way using simplifications.
	If $(\widehat{C}_2,A)$ is a non-observable pair then by applying a suitable basis transformation we can 
	assume that $(A,B,\widehat{C}_2)$ is of the following form:
	\begin{equation}\label{triang}
	A = \begin{bmatrix} A_{1,1} & A_{1,2} \\ 0 & A_{2,2} \end{bmatrix},\quad
	B = \begin{bmatrix} B_{1,1} & B_{1,2} \\ B_{2,1} & B_{2,2} \end{bmatrix}, \quad
	\widehat{C}_2 = \begin{bmatrix}  0 & C_{2,2}  \end{bmatrix},
	\end{equation}
	where $A_{2,2} \in \mathbb{R}^{n_2 \times n_2}, B_{2,2} \in \mathbb{R}^{n_2 \times p_2}, C_{2,2} \in \mathbb{R}^{p_2 \times n_2}$ for some $n_2 \ge 1$ such that $(C_{2,2},A_{2,2})$ is an observable pair. 		
	Note that stability of $A$ implies stability of $A_{2,2}$. 
	Define for $i=1,2$ the processes
	\[ Y_i(t)=\begin{bmatrix} \y_i(t) \\ \y_i(t+1) \\ \vdots \\ \y_i(t+n-1) \end{bmatrix}, \quad  Y(t)=\begin{bmatrix} \y(t) \\ \y(t+1) \\ \vdots \\ \y(t+n-1) \end{bmatrix}.\] 
	and the observability matrices
	\[ O_1 = \begin{bmatrix} C_{1,1} \\ C_{1,1}A_{1,1} \\ \vdots \\ C_{1,1}A_{1,1}^{n-1} \end{bmatrix}, \quad O_2 = \begin{bmatrix} \widehat{C}_2 \\ \widehat{C}_2A \\ \vdots \\ \widehat{C}_2A^{n-1} \end{bmatrix}, \quad O = \begin{bmatrix} C \\ CA \\ \vdots \\ CA^{n-1} \end{bmatrix}.\]
	Take a permutation matrix $P \in \mathbb{R}^{pn \times pn}$, which splits the rows corresponding to $\y_1$ and $\y_2$, such that
	\[ PO=\begin{bmatrix} O_{1,1} & O_{1,2} \\ 0 & O_2 \end{bmatrix},  \quad PY(t) = \begin{bmatrix} Y_1(t) \\ Y_2(t) \end{bmatrix}. \]
	where $\begin{bmatrix} O_{1,1} & O_{1,2} \end{bmatrix}=O_1$. Since from observability of $(C,A)$ and $(C_{2,2},A_{2,2})$ it follows that $O_2$ and $O$ are full column rank matrices and thus we can define
	\begin{equation}\label{state}
	\x(t) := (PO)^{+} E_l[\begin{bmatrix} Y_1(t)  \\ Y_2(t) \end{bmatrix} \mid \mathcal{H}_{t-}^{\y}],
	\end{equation}
	where
	\[ (PO)^{+}=\begin{bmatrix} O_{1,1}^{+} & -O_{1,1}^{+}O_{1,2}O_2^{+} \\ 0 & O_{2}^{+} \end{bmatrix}.\]
	Partition $\x(t)$ into
	\[ \x(t)=\begin{bmatrix} \x_1(t) \\ \x_2(t) \end{bmatrix}, \]
	where $\x_2(t) \in \mathbb{R}^{n_2}$.  It is easy to see that 
	\[ \x_2(t) = O_2^{+} E_l[Y_2(t)\mid \mathcal{H}_{t-}^{\y}], \]
	and hence from the assumption that $\y_1$ is Granger noncausal for $\y_2$
	\[ \x_2(t) = O_2^{+} E_l[Y_2(t)\mid \mathcal{H}_{t-}^{\y}]=O_2^{+} E_l[Y_2(t) \mid \mathcal{H}^{\y_2}_{t-}]. \]
	From  the definition of $\Lambda_k^{\y}$, $k \ge 1$ it follows that
	\begin{equation} \label{Covy}
	\begin{split} 
	& E[\begin{bmatrix} Y_1(t) \\ Y_2(t) \end{bmatrix}\y^T(t-k)] = P E[Y(t)\y^T(t-k)]=P\begin{bmatrix} \Lambda_{k} \\ \vdots \\ \Lambda_{k+n-1} \end{bmatrix}= PO A^{k-1}B,
	\end{split}
	\end{equation}
	and therefore, by using $E[\begin{bmatrix} Y_1(t) \\ Y_2(t) \end{bmatrix}\y^T(t-k)] = E[E_l[\begin{bmatrix} Y_1(t)  \\ Y_2(t) \end{bmatrix} \mid \mathcal{H}_{t-}^{\y}]\y^T(t-k)]$ we can conclude that
	\begin{equation} \label{granger:eq2}
	E[\x(t)\y^T(t-k)] = A^{k-1}B. 
	\end{equation}
	From \eqref{Covy} it follows that
	$$(PO)^{+}E_l[\begin{bmatrix} Y_1(t+1) \\ Y_2(t+1)\end{bmatrix}| \mathcal{H}_{t-}^{\y}]=A(PO)^{+}E_l[\begin{bmatrix} Y_1(t) \\ Y_2(t)\end{bmatrix}| \mathcal{H}_{t-}^{\y}]$$
	which implies 
	\begin{equation} \label{granger:onlyif_A}
	E_l[\x(t+1)|\mathcal{H}_{t-}^{\y}]=A\x(t).
	\end{equation}
	Define now $\e_i(t)=\y_i(t)-E_l[\y_i(t) \mid \mathcal{H}^{\y}_{t-}]$ for $i=1,2$ and join them as
	\[ \e(t)=\begin{bmatrix} \e_1(t) \\ \e_2(t) \end{bmatrix} = \y(t) - E_l[\y(t) \mid \mathcal{H}^{\y}_{t-}], \]
	where $\e_2(t)$ is orthogonal to $\mathcal{H}^{\y}_{t-}$ and hence also to $\mathcal{H}^{\y_2}_{t-}\supseteq\mathcal{H}^{\y}_{t-}$. Note that
	\[ \mathcal{H}_{(t+1)-}^{\y}=\mathcal{H}^{\y}_{t-} \oplus \mathcal{H}^{\e}_{t} ,\hspace{.15cm}
	\mathcal{H}_{(t+1)-}^{\y_2}=\mathcal{H}^{\y_2}_{t-} \oplus  \mathcal{H}^{\e_2}_{t}.\] 
	The components of $\x(t+1)$ belong to $\mathcal{H}_{(t+1)-}^{\y}$ and also, from the assumption that $\y_1$ does is Granger noncausal for $\y_2$, we know that the components of $\x_2(t+1)=O_2^{+} E_l[Y_2(t+1)\mid H_{(t+1)-}^{\y}]=O_2^{+} E_l[Y_2(t+1) \mid \mathcal{H}^{\y_2}_{(t+1)-}]$ belong to $\mathcal{H}^{\y_2}_{(t+1)-}$. It then follows that
	\[ \begin{split}
	& \x_1(t+1)=E_l[\x_1(t+1) \mid \mathcal{H}^{\y}_{t-}] + E_l[\x_1(t+1) \mid \mathcal{H}^{\e}_{t}]  \\
	& \x_2(t+1)=E_l[\x_2(t+1) \mid \mathcal{H}^{\y_2}_{t-}] + E_l[\x_2(t+1) \mid \mathcal{H}^{\e_2}_{t}]. \\
	\end{split} \]
	For a suitable $K=\begin{bmatrix}K_{1,1} & K_{1,2} \\ 0 & K_{2,2} \end{bmatrix}$ matrix, which is indeed the Kalman gain, we have that
	\[ \begin{split}
	E_l[\x_2(t+1) \mid \mathcal{H}^{\e_2}_{t}]=K_{2,2}\e_2(t), \quad E_l[\x_1(t+1) \mid \mathcal{H}^{\e}_{t}]=K_{1,1}\e_1(t)+K_{1,2}\e_2(t). 
	\end{split} \]
	Combining it with \eqref{granger:onlyif_A} we obtain the state equation below.
	\begin{align} \label{granger:eq7}
	\begin{split}
	\begin{bmatrix} \x_1(t+1) \\ \x_2(t+1) \end{bmatrix} &=
	\begin{bmatrix}A_{1,1} & A_{1,2} \\ 0 & A_{2,2} \end{bmatrix}
	\begin{bmatrix} \x_1(t) \\ \x_2(t) \end{bmatrix} + 
	\begin{bmatrix}K_{1,1} & K_{1,2} \\ 0 & K_{2,2} \end{bmatrix} \begin{bmatrix} \e_1(t) \\ \e_2(t) \end{bmatrix} 
	\end{split}
	\end{align}
	Finally, the components of $\x(t)$ belong to $\mathcal{H}_{t-}^{\y}$ and from \eqref{granger:eq2} it follows that
	\[ CE[\x(t)\y^T(t-k)] = CA^{k-1}B=\Lambda_k = E[\y(t)\y^T(t-k)], \]
	i.e. $\y(t)-C\x(t)$ is orthogonal to $\mathcal{H}^{\y}_{t-}$. Hence, $\y(t)=C\x(t)+\e(t)$.
	Since $C$ has block triangular form the output equation can be written as
	\begin{align} \label{granger:eq9}
	\begin{split}
	\begin{bmatrix} \y_1(t) \\ \y_2(t) \end{bmatrix} &=
	\begin{bmatrix}C_{1,1} & C_{1,2} \\ 0 & C_{2,2} \end{bmatrix}
	\begin{bmatrix} \x_1(t) \\ \x_2(t) \end{bmatrix} + \begin{bmatrix} \e_1(t) \\ \e_2(t) \end{bmatrix}. 
	\end{split}
	\end{align}
	The equations \eqref{granger:eq7} and \eqref{granger:eq9} yield a state-space representation as in \eqref{Km22ab}. Since $(A_{2,2},K_{2,2},C_{2,2},I)$ gives an innovation representation for $\y_2$ it has a stable causal inverse system. Therefore the minimum phase condition follows. At last, minimality of the constructed system comes from realization theory. \\
	Now we discuss the case when $(\widehat{C}_2,A)$ is an observable pair. Defining $O_1$ and $O_2$ as the observability matrix of $(\widehat{C}_2,A)$ and $(\widehat{C}_1,A)$, respectively, the permuted observability matrix is $PO=\begin{bmatrix}O_1 \\O_2 \end{bmatrix}$. Considering that now $(PO)^{+}=\begin{bmatrix}0 & O_2^{+} \end{bmatrix}$ the state process defined in \eqref{state} becomes
	$$\x(t)=\begin{bmatrix}0 & O_2^{+}\end{bmatrix}E_l[\begin{bmatrix} Y_1(t)\\Y_2(t)	\end{bmatrix}|\mathcal{H}^{\y}_{t-}]=O_2^{+}E_l[Y_2(t)|\mathcal{H}^{\y}_{t-}]=O_2^{+}E_l[Y_2(t)|\mathcal{H}^{\y_2}_{t-}]=\x_2(t),$$
	where we used that $\y_1(t)$ is Granger noncausal for $\y_2$. We can see that $\dim(\x_1(t))=0$, but the rest of the proof, particularly the calculation of \eqref{granger:eq7} and \eqref{granger:eq9}, remains the same. Consequently, the state-space representation of $\y$ when $(\widehat{C}_2,A)$ is observable is given by
	\begin{align*}
	\x_2(t) &= A_{2,2}\x_2(t) + \begin{bmatrix}0 & K_{2,2}\end{bmatrix}\begin{bmatrix}\e_1(t) \\\e_2(t)\end{bmatrix}\\
	\y(t) & = C_{2,2}\x_2(t) + \begin{bmatrix}\e_1(t) \\\e_2(t)\end{bmatrix},
	\end{align*}
	where $A_{2,2}=A$ and $C_{2,2}=C$.
\end{proof}

\begin{proof}[Proof of Lemma \ref{Algo12}]
	In the proof of Theorem \ref{Kalman22} we constructed the representation \eqref{Km22ab} by using an $(A,B,\bar{C})$ factorization of the Markov parameters $\Lambda_k^{\y}$ of a process $\y$ such that $BA^k\bar{C}=\Lambda_k^{\y}$. By realization theory we know that defining $\bar{C}$ as $CPA^T+DB^T$ for a state-space representation $(A,B,C,D)$ of $\y$ where $P$ is the state covariance the equation $BA^k\bar{C}=\Lambda_k^{\y}$ is satisfied for all $k\geq 0$. Notice that in Step 1 of Algorithm \ref{algoGrangerABCD} we defined $\bar{C}$ exactly this way. Also, in Step 1 and Step 2 of Algorithm \ref{algoGrangerY} we choose $(A,C,\bar{C})$ to satisfy $BA^k\bar{C}=\Lambda_k^{\y}$, $k\geq 0$. The succeeding steps of both algorithms are designed according to the proof of Theorem \ref{Kalman22}. Consequently, the resulting system is in block triangular form if and only if the Granger non-causality condition holds.
\end{proof}

\begin{proof}[Proof of Theorem \ref{GC22}]
	\textbf{If} The existence of system \eqref{GC22ab} follows from the proof of Theorem \ref{Kalman22} where we created the system such that it satisfies \eqref{GC22i}-\eqref{GC22iii} in Theorem \ref{GC22}. Condition $\eqref{GC22i}$ holds since the resulting system is in innovation form. Condition $\eqref{GC22ii}$ and $\eqref{GC22iii}$ come from fundamental result for Kalman realization.
	
	\textbf{Only if} Assuming that $\y$ has a realization of the form \eqref{GC22ab} and that (\ref{GC22i})-(\ref{GC22iii}) hold we prove that 
	\begin{equation}\label{GC22P}
	E_l[\y_2(t+1) \mid \mathcal{H}^{\y}_{t-}]=E_l[\y_2(t+1) \mid \mathcal{H}^{\y_2}_{t-}] \hspace{.2cm} \forall t \in \mathbb{Z},
	\end{equation}
	i.e. that $\e_2$ is in fact the innovation process of $\y_2$. The rest of the proof, to see \eqref{GC22P} for $k$-step predictions, is the same as in the proof of Theorem \ref{Kalman22}.
	Let $Q_{\hat{\x}}$ be the minimal positive definite solution of \eqref{Km22gain} which  exists by \cite[Theorem 6.9.3]{LinPic15}. From stochastic realization theory it follows that if we choose $\hat{K}=(\bar{C}_{2,2} - A_{2,2}Q_{\hat{\x}}C_{2,2}^T)\left(\Lambda_0^{\y_2}-CQ_{\hat{\x}}C^T\right)^{-1}$, then there is a realization of $\y_2$ in the form of
	\[\begin{split}
	\hat{\x}(t+1) &=A_{2,2}\hat{\x}(t)+\hat{K}\hat{\e}(t) \\
	\y_2(t) & =C_{2,2}\hat{\x}(t)+\hat{\e}(t)
	\end{split}	\]
	with $\hat{\e}(t)=\y_2(t)-E_l[\y_2(t) \mid \{ \y_2(t-l)\}_{l=1}^{\infty}]$ being the innovation of $\y_2(t)$. From assumption \eqref{GC22ii} it follows that $\hat{P}=Q_{\x_2}$. In particular, $E[\hat{\e}(t)\hat{\e}^T(t)]=E[\e_2(t)\e_2^T(t)]$ which means that the projection errors of $\y_2$ onto $\mathcal{H}_{t-}^{\y_2}$ and $\mathcal{H}_{t-}^{\y}$ are equal. Since $\mathcal{H}_{t-}^{\y_2} \subset \mathcal{H}_{t-}^{\y}$ it already verifies \eqref{GC22P}. \\
\end{proof}

For the proof of Theorem \ref{GCnn} we need the following two lemmas:

\begin{Lemma} \label{GC123}
	Assume that $\y=[\y_1^T,\y_2^T,\y_3^T]^T$ is a ZMSIR process. Then $\y_1$ and $\y_2$ does not Granger cause $\y_3$ if and only if $[\y_1^T,\y_2^T]^T$ does not Granger cause $\y_3$.
\end{Lemma}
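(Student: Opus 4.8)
The plan is to work entirely in the Hilbert space $\mathcal{H}$ of square-integrable random variables and to exploit the fact that $E_l[\,\cdot\mid\,\cdot\,]$ is the orthogonal projection onto the closed subspace generated by the conditioning variables, applied componentwise. Fix an arbitrary $t$ and $k$, write $\y=[\y_1^T,\y_2^T,\y_3^T]^T$, and abbreviate the relevant past spaces as $S_3:=\mathcal{H}^{\y_3}_{t-}$, $S_{13}:=\mathcal{H}^{\y_1,\y_3}_{t-}$, $S_{23}:=\mathcal{H}^{\y_2,\y_3}_{t-}$ and $S:=\mathcal{H}^{\y}_{t-}$. In this notation the three non-causality statements, read componentwise for $\y_3(t+k)$, are: $E_l[\y_3(t+k)\mid S_3]=E_l[\y_3(t+k)\mid S_{13}]$ ($\y_1$ noncausal for $\y_3$), $E_l[\y_3(t+k)\mid S_3]=E_l[\y_3(t+k)\mid S_{23}]$ ($\y_2$ noncausal for $\y_3$), and $E_l[\y_3(t+k)\mid S_3]=E_l[\y_3(t+k)\mid S]$ ($[\y_1^T,\y_2^T]^T$ noncausal for $\y_3$).

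The one structural fact I would isolate first, as a short auxiliary claim, is the subspace identity $S=\overline{S_{13}+S_{23}}$. Indeed $S_{13}$ is generated by the past components of $\y_1$ and of $\y_3$, and $S_{23}$ by those of $\y_2$ and of $\y_3$, so their algebraic sum contains all the generators of $S$ and its closure is exactly $S$. The crucial point — and the reason the familiar obstruction to inferring a joint statement from two marginal ones does not arise here — is that the past of $\y_3$ sits inside \emph{both} $S_{13}$ and $S_{23}$, so these two spaces already jointly span the whole history $S$.

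For the direction ``joint $\Rightarrow$ pairwise'' I would simply sandwich. From $S_3\subseteq S_{13}\subseteq S$ and $E_l[\y_3(t+k)\mid S_3]=E_l[\y_3(t+k)\mid S]$, the residual $\y_3(t+k)-E_l[\y_3(t+k)\mid S_3]$ is orthogonal to $S$, hence to the intermediate space $S_{13}$, which forces $E_l[\y_3(t+k)\mid S_{13}]=E_l[\y_3(t+k)\mid S_3]$; the same argument applies to $S_{23}$. For the converse ``pairwise $\Rightarrow$ joint'', set $r:=\y_3(t+k)-E_l[\y_3(t+k)\mid S_3]$. The two marginal assumptions say exactly that $E_l[\y_3(t+k)\mid S_3]$ is also the projection of $\y_3(t+k)$ onto $S_{13}$ and onto $S_{23}$, i.e. $r\perp S_{13}$ and $r\perp S_{23}$. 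Then $r$ is orthogonal to the algebraic sum $S_{13}+S_{23}$ and, by continuity of the inner product, to its closure $S$. Since $E_l[\y_3(t+k)\mid S_3]\in S_3\subseteq S$, this identifies it as the projection onto $S$, which is the joint non-causality statement.

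As $t$ and $k$ were arbitrary, both equalities hold for all $t,k$, completing the equivalence. I do not expect a genuine obstacle: the whole argument is elementary projection bookkeeping, and the only step that deserves care is the span identity $S=\overline{S_{13}+S_{23}}$ together with the passage from orthogonality on the algebraic sum $S_{13}+S_{23}$ to orthogonality on its closure $S$.
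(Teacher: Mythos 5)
Your proposal is correct and follows essentially the same route as the paper: the ``joint $\Rightarrow$ pairwise'' direction by projecting onto the intermediate spaces $\mathcal{H}^{\y_1,\y_3}_{t-}$ and $\mathcal{H}^{\y_2,\y_3}_{t-}$, and the converse by noting the residual is orthogonal to both pairwise pasts, hence to their (closed) sum, which is the full past $\mathcal{H}^{\y}_{t-}$. Your version is, if anything, slightly more careful than the paper's in making explicit the closure of the algebraic sum and the continuity step.
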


\begin{Lemma} \label{Lem}
		If $\y_i$ is Granger noncausal for $\y_n$ for all $i \in \lbrace1,2,\ldots, n-1 \rbrace$ than there is a linear stochastic state-space representation of $\y$ in the form of		
		\begin{multicols}{2}
			\resizebox{1.88\linewidth}{!}
			{\hskip -9 mm
				\begin{minipage}{\linewidth}
					\begin{align*} 
					\left[ \! \begin{array}{c} \x_1(t+1) \\ \x_2(t+1) \\ \vdots \\ \x_{n-1}(t+1) \\ \hline \x_n(t+1) \end{array}\!\right] \! &\mathord{=} \!
					\underbrace{\left[ \! \begin{array}{cccc|c} A_{1,1} & 0 & \dots & 0 & A_{1,n} \\ 0 & A_{2,2} & \dots & 0 & A_{2,n} \\ \vdots & \vdots & \ddots & \vdots & \vdots \\ 0 & 0 & \dots & A_{n-1,n-1} & A_{n-1,n} \\\hline 0 & 0 & \dots & 0 & A_{n,n}\end{array}\!\right]}_A
					\left[ \! \begin{array}{c} \x_1(t) \\ \x_2(t) \\ \vdots \\ \x_{n-1}(t) \\ \hline \x_n(t) \end{array}\!\right] \!	\mathord{+} \!
					\underbrace{ \left[\! \begin{array}{cccc|c} K_{1,1} & 0 & \dots & 0 & K_{1,n} \\ 0 & K_{2,2} & \dots & 0 & K_{2,n} \\ \vdots & \vdots & \ddots & \vdots & \vdots \\ 0 & 0 & \dots & K_{n-1,n-1} & K_{n-1,n} \\	\hline 0 & 0 & \dots & 0 & K_{n,n}\end{array} \! \right] \!}_K
					\left[ \! \begin{array}{c}  \e_1(t) \\ \e_2(t) \\ \vdots \\ \e_{n-1}(t) \\ \hline \e_n(t) \end{array}\!\right]  \\
					\left[ \! \begin{array}{c}  \y_1(t) \\ \y_2(t) \\ \vdots \\ \y_{n-1}(t) \\ \hline \y_n(t) \end{array}\!\right] &\mathord{=}
					\underbrace{\left[\!\begin{array}{cccc|c} C_{1,1} & 0 & \dots & 0 & C_{1,n} \\ 0 & C_{2,2} & \dots & 0 & C_{2,n} \\ \vdots & \vdots & \ddots & \vdots & \vdots \\ 0 & 0 & \dots & C_{n-1,n-1} & C_{n-1,n} \\	\hline 0 & 0 & \dots & 0 & C_{n,n}\end{array}\! \right]}_C \!
					\left[ \! \begin{array}{c}  \x_1(t) \\ \x_2(t) \\ \vdots \\ \x_{n-1}(t) \\ \hline \x_n(t)\end{array}\!\right] \!+\! 
					\left[ \! \begin{array}{c}  \e_1(t) \\ \e_2(t) \\ \vdots \\ \e_{n-1}(t) \\ \hline \e_n(t) \end{array}\!\right] ,
					\end{align*}
				\end{minipage}
			}\break
			\vskip 11mm
			\begin{equation}
			\label{repr1n_Lem}
			\end{equation}
		\end{multicols}	
		such that the following holds:
		\begin{enumerate}[(i)]
			\item \label{Lemi} $\e_i(t)=\y_i(t)-E_l[\y_i(t)| \mathcal{H}_{t-}^{\y_i,\y_n}], \hspace{.2cm}  i=1,2,\dots, (n-1);$
			\item \label{Lemii} $\e_n(t)=\y_n(t)-E_l[\y_n(t)|\mathcal{H}^{\y}_{t-}];$
			\item \label{Lemiv} the matrix $Q_{\x_n}:=E[\x_n(t)\x_n^T(t)]$ is the minimal positive definite solution of
			\begin{equation*}
			\Sigma=A_{n,n}\Sigma A_{n,n}^T+(G_{n,n}-A_{n,n}\Sigma C_{n,n}^T)( \Lambda_0^{\y_n}- C_{n,n}\Sigma C_{n,n}^T)^{-1}(G_{n,n}-A_{n,n}\Sigma C_{n,n}^T)^T;
			\end{equation*}
			\item \label{Lemiii} the subsystem representing $\y_n$ is minimal;
			\item \label{Lemv} the state-space is constructible;
			\item \label{Lemvi} $(A,C)$ is observable.
		\end{enumerate}
\end{Lemma}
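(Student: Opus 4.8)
The plan is to build the coordinated representation \eqref{repr1n_Lem} by treating each agent separately against the coordinator via Theorem \ref{GC22}, and then gluing the resulting block‑triangular systems along a single common coordinator subsystem. First I would reduce the hypothesis to a statement about the innovation of $\y_n$. Applying Lemma \ref{GC123} inductively to the partition $[\y_1^T,\ldots,\y_{n-1}^T]^T$, the assumption that each $\y_i$ is Granger noncausal for $\y_n$ yields that $[\y_1^T,\ldots,\y_{n-1}^T]^T$ is Granger noncausal for $\y_n$. Hence $E_l[\y_n(t+k)\mid\mathcal{H}^{\y_n}_{t-}]=E_l[\y_n(t+k)\mid\mathcal{H}^{\y}_{t-}]$ for all $k\ge 0$, so the innovation $\e_n(t):=\y_n(t)-E_l[\y_n(t)\mid\mathcal{H}^{\y}_{t-}]$ coincides with the innovation of $\y_n$ computed from $\mathcal{H}^{\y_n}_{t-}$ and, for every $i$, from $\mathcal{H}^{\y_i,\y_n}_{t-}$. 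This guarantees consistency of $\e_n$ across the per‑agent constructions and immediately gives condition (\ref{Lemii}).

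Next, for each $i\in\{1,\ldots,n-1\}$ I would apply the ``only if'' direction of Theorem \ref{GC22} — equivalently, the minimal construction in the proof of Theorem \ref{Kalman22} — to the pair $[\y_i^T,\y_n^T]^T$. This produces a minimal block‑triangular Kalman representation with state $[\x_i^T,(\x_n^{(i)})^T]^T$, innovation $[\e_i^T,\e_n^T]^T$, agent block $(A_{i,i},K_{i,i},C_{i,i})$ coupled to a coordinator subsystem $(A^{(i)}_{n,n},K^{(i)}_{n,n},C^{(i)}_{n,n},I)$ that represents $\y_n$, is minimal, and whose state covariance is the minimal positive definite solution of \eqref{Km22gain}; moreover $\e_i(t)=\y_i(t)-E_l[\y_i(t)\mid\mathcal{H}^{\y_i,\y_n}_{t-}]$, giving (\ref{Lemi}).

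The central step is to make all these coordinator subsystems literally the same. Since every $(A^{(i)}_{n,n},K^{(i)}_{n,n},C^{(i)}_{n,n},I)$ is a minimal innovation representation of $\y_n$ driven by the same innovation $\e_n$ and, by (\ref{Lemiv}), with the same minimal DARE state covariance, \cite[Theorem 6.6.1]{LinPic15} furnishes a unique similarity $S_i$ transforming it into a fixed reference realization $(A_{n,n},K_{n,n},C_{n,n},I)$, namely that of $i=1$. Applying the block transformation $T_i=\mathrm{diag}(I,S_i)$ to the $i$‑th system preserves the block‑triangular zero pattern, leaves the agent block untouched, and turns the coordinator part into the common one. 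Because a stable system driven by a fixed noise process has a unique stationary state, the transformed coordinator states $S_i\x_n^{(i)}(t)$ are one and the same process $\x_n(t)=\sum_{j\ge1}A_{n,n}^{j-1}K_{n,n}\e_n(t-j)$. I can therefore stack $\x_1,\ldots,\x_{n-1}$ together with the single shared $\x_n$; the absence of agent‑to‑agent coupling in each pairwise system forces the off‑diagonal agent blocks to vanish, and the result is exactly \eqref{repr1n_Lem}. Conditions (\ref{Lemiv}) and (\ref{Lemiii}) are then inherited from the reference coordinator subsystem through Theorem \ref{GC22}.

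It remains to check (\ref{Lemv})--(\ref{Lemvi}). For observability, note that the states $(\x_i,\x_n)$ with outputs $(\y_i,\y_n)$ form a closed subsystem of the merged system — precisely the $i$‑th transformed pairwise system — which is observable by minimality. Since the full output contains all $\y_i$ and $\y_n$, any direction unobservable for the merged $(A,C)$ is unobservable in each pairwise subsystem and hence zero; this gives (\ref{Lemvi}). Constructibility (\ref{Lemv}) follows by the dual argument applied to the backward quantities $\bar C^{(i)}$, or from the construction in Theorem \ref{Kalman22}, where each state component lies in $\mathcal{H}^{\y_i,\y_n}_{t-}\subseteq\mathcal{H}^{\y}_{t-}$, so the entire state lies in $\mathcal{H}^{\y}_{t-}$. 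The main obstacle is the gluing step: proving that the per‑agent coordinator states can be simultaneously aligned to a single common process while keeping the coordinated zero pattern intact, which rests on the uniqueness of the minimal innovation realization of $\y_n$ and on pinning down its state covariance as the minimal DARE solution.
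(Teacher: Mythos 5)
Your proposal is correct and follows essentially the same route as the paper: apply Theorem \ref{GC22} pairwise to each $[\y_i^T,\y_n^T]^T$, use Lemma \ref{GC123} to identify all the coordinator innovations with $\y_n(t)-E_l[\y_n(t)\mid\mathcal{H}^{\y}_{t-}]$, align the minimal coordinator subsystems by the isomorphism of minimal innovation realizations, and merge. Your observability argument via the unobservable subspace of the merged pair and the constructibility argument via $\x(t)\in\mathcal{H}^{\y}_{t-}$ are equivalent to the paper's PBH-style and construction-based arguments, respectively.
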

\begin{proof}[proof of Lemma \ref{GC123}]
	\textbf{If} By definition, the joint process $[\y_1^T,\y_2^T]^T$ does not Granger cause $\y_3$ if for all $t,k\in \mathbb{Z},k\geq 0$
	$$E_l[\y_3(t+k)|\mathcal{H}^{\y_3}_{t-}]=E_l[\y_3(t+k)|\mathcal{H}^{\y}_{t-}].$$
	Projecting both sides onto $\mathcal{H}^{\y_1,\y_3}_{t-}$ and to $\mathcal{H}^{\y_2,\y_3}_{t-}$ we obtain that
	$$E_l[\y_3(t+k)|\mathcal{H}^{\y_3}_{t-}]=E_l[\y_3(t+k)|\mathcal{H}^{\y_1,\y_3}_{t-}], \hspace{.3cm}E_l[\y_3(t+k)|\mathcal{H}^{\y_3}_{t-}]=E_l[\y_3(t+k)|\mathcal{H}^{\y_2,\y_3}_{t-}],$$ 
	which implies that $\y_1$ and $\y_2$ does not Granger cause $\y_3$.\\
	
	\textbf{Only if} For a fixed $t\in \mathbb{Z}$ define the processes $\alpha_{k} := \y_3(t + k) - E_l[\y_3(t + k)|\mathcal{H}_{t-}^{\y_3} ]$, $k\in\mathbb{Z}$, $k\geq 0$. It then follows that $\alpha_{t+k} \perp \mathcal{H}_{t-}^{\y_3}$ and from the conditions we obtain that 
	$\alpha_{t+k} \perp  \mathcal{H}_{t-}^{\y_1,\y_2}$ and $\mathcal{H}_{t-}^{\y_2,\y_3}$. Therefore, $\alpha_{t+k}$ is orthogonal to $\mathcal{H}_{t-}^{\y_1,\y_3}+\mathcal{H}_{t-}^{\y_2,\y_3}=\mathcal{H}_{t-}^{\y}$, the Hilbert space generated by $\y$. Projecting $\alpha_{t+k}$ onto $\mathcal{H}_{t-}^{\y}$ we obtain that
	$$ E_l[\y_3 (t + k) | \mathcal{H}_{t-}^{\y} ] = E_l [\y_3 (t + k) | \mathcal{H}_{t-}^{\y_3} ],$$ which by definition is that  $[\y_1,\y_2]$ does not Granger cause $\y_3$.
\end{proof}

\begin{proof}[proof of Lemma \ref{Lem}]
	Applying Theorem \ref{GC22} for $[\y_i^T,\y_n^T]^T$ we obtain minimal innovation representations such as
	\begin{align} \label{reprin}
		\begin{split}
		\begin{bmatrix} \tilde{\x}_i(t+1) \\ \tensor[_i]{\tilde{\x}}{_n}(t+1) \end{bmatrix} &=
		\begin{bmatrix}\tilde{A}_{i,i} & \tilde{A}_{i,n} \\  0 &  \tensor[_i]{\tilde{A}}{_{n,n}} \end{bmatrix}
		\begin{bmatrix} \tilde{\x}_i(t) \\ \tensor[_i]{\tilde{\x}}{_n}(t) \end{bmatrix} + 
		\begin{bmatrix}\tilde{K}_{i,i} & \tilde{K}_{i,n} \\  0 &  \tensor[_i]{\tilde{K}}{_{n,n}}  \end{bmatrix}
		\begin{bmatrix} \tilde{\e}_i(t) \\ \tensor[_i]{\tilde{\e}}{_n}(t) \end{bmatrix}  \\
		\begin{bmatrix} \y_i(t) \\ \y_n(t) \end{bmatrix} &=
		\begin{bmatrix}\tilde{C}_{i,i} & \tilde{C}_{i,n} \\  0 &  \tensor[_i]{\tilde{C}}{_{n,n}} \end{bmatrix}
		\begin{bmatrix} \tilde{\x}_i(t) \\ \tensor[_i]{\tilde{\x}}{_n}(t) \end{bmatrix} + \begin{bmatrix} \tilde{\e}_i(t) \\ \tensor[_i]{\tilde{\e}}{_n}(t) \end{bmatrix}, 
		\end{split}
	\end{align}
		where the following holds:
	\begin{enumerate}[(i)]
		\item \label{LemPri} $\tilde{\e}_i(t)=\y_i(t)-E_l[\y_i(t)| \mathcal{H}_{t-}^{\y_i,\y_n}]$;
		\item \label{LemPrii} $\tensor[_i]{\tilde{\e}}{_n}(t)=\y_n(t)-E_l[\y_n(t)|\mathcal{H}_{t-}^{\y_i,\y_n}]=\y_n(t)-E_l[\y_n(t)|\mathcal{H}^{\y}_{t-}]$;
		\item \label{LemPriii} the matrix $\tensor[_i]{Q}{_{\x_n}}:=E[\tensor[_i]{\tilde{\x}}{_{n}}(t)\tensor[_i]{\tilde{\x}}{_{n}}^T(t)]$ is the minimal positive definite solution of
		\begin{equation*}
			\Sigma=\tensor[_i]{\tilde{A}}{_{n,n}}\Sigma\tensor[_i]{\tilde{A}}{_{n,n}}^T+(\tensor[_i]{G}{_{n,n}}-\tensor[_i]{\tilde{A}}{_{n,n}}\Sigma \tensor[_i]{\tilde{C}}{_{n,n}}^T)( \Lambda_0^{\y_n}- \tensor[_i]{\tilde{C}}{_{n,n}}\Sigma \tensor[_i]{\tilde{C}}{_{n,n}}^T)^{-1}(\tensor[_i]{G}{_{n,n}}-\tensor[_i]{\tilde{A}}{_{n,n}}\Sigma \tensor[_i]{\tilde{C}}{_{n,n}}^T)^T;
		\end{equation*}
		\item \label{LemPriv} the subsystem representing $\y_n$ is minimal;
		\item \label{LemPrv} $[\tilde{\x}_i^T(t),\tensor[_i]{\tilde{\x}}{_n}^T(t)]^T \in \mathcal{H}_{t-}^{\y_i,\y_n}$. 
	\end{enumerate}
	Note that the second equation in \eqref{LemPrii} is the consequence of Lemma \ref{GC123} but, aside from that, \eqref{LemPri}-\eqref{LemPrv} come from the construction of the subsystems according to the proof of Theorem \ref{GC22}. Since the matrices $(\tensor[_i]{\tilde{A}}{_{n,n}},\tensor[_i]{\tilde{K}}{_{n,n}},\tensor[_i]{\tilde{C}}{_{n,n}},I)$  in \eqref{reprin} define minimal realizations for $\y_n$ in innovation form, they are isomorphic and thus there exist nonsingular $T_i$ matrices such that $\tensor[_i]{\tilde{\x}}{_n}=T_i \hspace{.05cm}\tensor[_1]{\tilde{\x}}{_n}$. Defining $T_1$ as the identity matrix and $T_i$ by $\tensor[_1]{\tilde{\x}}{_n}=T_i \hspace{.05cm}\tensor[_1]{\tilde{\x}}{_n}$ we can merge the representations \eqref{reprin} into the form (\ref{repr1n_Lem}) with the matching below.\\
	\resizebox{1\linewidth}{!}
	{
	\begin{minipage}{\linewidth}
		\begin{align*}
			A:=\left[ \! \begin{array}{cccc|c} \tilde{A}_{1,1} & 0 & \dots & 0 & \tilde{A}_{1,n}T_1 \\ 0 & \tilde{A}_{2,2} & \dots & 0 & \tilde{A}_{2,n}T_2 \\ \vdots & \vdots & \ddots & \vdots & \vdots \\ 0 & 0 & \dots & \tilde{A}_{n-1,n-1} & \tilde{A}_{n-1,n}T_{n-1} \\\hline 0 & 0 & \dots & 0 & \tensor[_1]{\tilde{A}}{_{n,n}}\end{array}\!\right]; &&
			K:=\left[ \! \begin{array}{cccc|c} \tilde{K}_{1,1} & 0 & \dots & 0 & \tilde{K}_{1,n} \\ 0 & \tilde{K}_{2,2} & \dots & 0 & \tilde{K}_{2,n} \\ \vdots & \vdots & \ddots & \vdots & \vdots \\ 0 & 0 & \dots & \tilde{K}_{n-1,n-1} & \tilde{K}_{n-1,n} \\ \hline 0 & 0 & \dots & 0 & \tensor[_1]{\tilde{K}}{_{n,n}}\end{array}\!\right]; \\[.3cm]				
			C:=\left[ \! \begin{array}{cccc|c} \tilde{C}_{1,1} & 0 & \dots & 0 & \tilde{C}_{1,n}T_1 \\ 0 & \tilde{C}_{2,2} & \dots & 0 & \tilde{C}_{2,n}T_2 \\ \vdots & \vdots & \ddots & \vdots & \vdots \\ 0 & 0 & \dots & \tilde{C}_{n-1,n-1} & \tilde{C}_{n-1,n}T_{n-1} \\\hline 0 & 0 & \dots & 0 & \tensor[_1]{\tilde{C}}{_{n,n}}\end{array}\!\right]; &&
			\begin{matrix}
			\x(t):=\begin{bmatrix} \tilde{\x}_1^T(t) & \dots & \tilde{\x}_{n-1}^T(t) & \tensor[_1]{\tilde{\x}}{_n}^T\end{bmatrix}^T;\hspace{.5cm} \\
			\\
			\e(t):=\begin{bmatrix} \tilde{\e}_1^T(t) & \dots & \tilde{\e}_{n-1}^T(t) & \tensor[_1]{\tilde{\e}}{_n}^T\end{bmatrix}^T.\hspace{.5cm} 
			\end{matrix}
		\end{align*}
	\end{minipage}
	}\break
		
	We mention that since $\x(t) \in \mathcal{H}_{t-}^{\y}$ is by definition the constructibility of the state-space \eqref{Lemv} follows. This construction indicates that the conditions \eqref{Lemi}-\eqref{Lemv} are satisfied thus it only remains to prove \eqref{Lemvi}. To see that the pair $(C,A)$ is observable consider that observability is equivalent with the full rank property of the matrix $\begin{bmatrix} C \\ I-\lambda A \end{bmatrix}$ for all $\lambda \in \mathbb{C}$. From the minimality of (\ref{reprin}) we have that $(\tilde{C}_{i,i},\tilde{A}_{i,i})$ are observable pairs so that $\begin{bmatrix} \tilde{C}_{i,i} \\ I-\lambda \tilde{A}_{i,i} \end{bmatrix}$ has full rank for all $\lambda \in \mathbb{C}$. A transformation of $\begin{bmatrix} C \\ I-\lambda A \end{bmatrix}$ into an upper block triangular form such that the block diagonal submatrices are $\begin{bmatrix} \tilde{C}_{i,i} \\ I-\lambda \tilde{A}_{i,i} \end{bmatrix}$ gives the observability of $(A,C)$ and completes the proof.
\end{proof}

\begin{proof}[Proof of Theorem \ref{GCnn}]
	\textbf{Only if} Supposing that the representation \eqref{Cor_nn} exists with the properties \eqref{Thi}-\eqref{Thii}-\eqref{Thiii}, we first show that (\ref{GC1ni}*) and (\ref{GC1nii}*) hold and then we verify that additionally the minimality of such a representation implies \eqref{Thiv}. Applying Theorem \ref{GC22}, a minimal stochastic representation in the form \eqref{Cor_nn} with the properties \eqref{Thi}-\eqref{Thiii} implies that $[\y_1^T,\y_2^T,\ldots,\y_{n-1}^T]$ is Granger noncausal for $\y_n$ and therefore condition (\ref{GC1ni}*) holds. For condition (\ref{GC1nii}*) we consider the equations $\sum_{j=0}^{\infty}$
	\begin{align}\label{sseq} 
		\begin{split} 
		\x_n(t+1) & = \sum_{j=0}^{\infty} A_{n,n}^{j}K_{n,n}\e_n(t-j) \\
		\x_i(t+1) & = \sum_{l=0}^{\infty}A_{i,i}^{l}\left(K_{i,i}\e_i(t-l)+K_{i,n}\e_n(t-l)\right)+A_{i,n}\sum_{j=0}^{\infty}A_{n,n}^{j}K_{n,n}\e_n(t-j-1)
		\end{split}
	\end{align}
	for $i \in \lbrace 1,2,\dots , n-1 \rbrace$. Throughout the proof $i$ will be an element of the set $\lbrace1,2,\dots, n-1\rbrace$. From \eqref{sseq} and property \eqref{GC1ni} it follows that $\mathcal{H}_{t}^{\x_i} \subseteq \mathcal{H}_{t-}^{\e_i,\e_n}=\mathcal{H}_{t-}^{\y_i,\y_n}$ and also that $\mathcal{H}_{t}^{\x_n} \subseteq \mathcal{H}_{t-}^{\e_n}=\mathcal{H}_{t-}^{\y_n}$. Observe that for $k \in \mathbb{Z}^{+}$
	\begin{align} \label{outeq}
		\begin{split} 
		\y_n(t+k) & = C_{n,n}A_{n,n}^{k}\x_n(t)+\z_n, \hspace{.2cm} \z_n \in \operatorname{span} \lbrace \e_n(t+j)\rbrace_{j=0}^k\\[.5em]
		\y_i(t+k) & = C_{i,i}A_{i,i}^{k}\x_i(t)+ C_{i,n}A_{n,n}^{k-1}\x_n(t)+\z_{i,n}, \hspace{.2cm} \z_{i,n} \in \operatorname{span} \lbrace \e_i(t+j),\e_n(t+j)\rbrace_{j=0}^k,
		\end{split}
	\end{align}
	where $\lbrace \e_n(t+j)\rbrace_{j=0}^k$ and $ \lbrace \e_i(t+j),\e_n(t+j)\rbrace_{j=0}^k$ are orthogonal to $\mathcal{H}_{t-}^{\y}$ because $\e$ is the innovation process of $\y$. Projecting \eqref{outeq} onto $\mathcal{H}^{\y_i,\y_j,\y_n}_{t-}$ we can write that
	\begin{equation} \label{Thnii*}
	E_l[\y_j(t+k)|\mathcal{H}^{\y_i,\y_j,\y_n}_{t-}]=C_{j,j}A_{j,j}^{k}\x_j(t)+ C_{j,n}A_{n,n}^{k}\x_n(t) \in \mathcal{H}_{t-}^{\y_j,\y_n},
	\end{equation}
	which leads to (\ref{GC1nii}*). It remained to show that minimality of \eqref{Cor_nn} implies \eqref{Thiv}. By analogy with \eqref{Thnii*} it is easy to see that
	\begin{align} \label{Obseq}
	\begin{split}
	E_l[\y_n(t+k)|\mathcal{H}_{t-}^{\y_n}] &= C_{n,n}A_{n,n}^{k}\x_n(t)\\ E_l[\y_i(t+k)|\mathcal{H}_{t-}^{\y_i,\y_n}] &= C_{i,i}A_{i,i}^{k}\x_i(t)+ C_{i,n}A_{n,n}^{k}\x_n(t),
	\end{split}
	\end{align}
	which gives that $E_l[\mathcal{H}_{t+}^{\y_n}| \mathcal{H}_{t-}^{\y_n}]\subseteq \mathcal{H}^{\x_n}_{t}$ and $E_l[\mathcal{H}_{t+}^{\y_i}|\mathcal{H}_{t-}^{\y_i,\y_n}] \subseteq \mathcal{H}^{\x_i,\x_n}_{t}$. Observe that
	\begin{equation} \label{XVobs}
	E_l[\begin{bmatrix}\y_n(t) \\ \y_n(t+1) \\ \vdots \\ \y_n(t+N-1) \end{bmatrix}|\mathcal{H}_{t-}^{\y_n}] = O_{n} \x_n(t),
	\end{equation}
	where $N$ is the dimension of $A_{n,n}$ and $O_{n}$ is the observability matrix of $(A_{n,n},C_{n,n})$. Since $(A_{n,n},C_{n,n})$ is observable, $O_n$ has left inverse and we can conclude that $E_l[\mathcal{H}_{t+}^{\y_n}| \mathcal{H}_{t-}^{\y_n}]= \mathcal{H}_{t}^{\x_n}$. Using the minimality of the state-space we can partition $\mathcal{H}_{t}^{\x}$ as
	\begin{equation} \label{Xpart}
	\mathcal{H}_{t}^{\x}=(\mathcal{H}_{t}^{\x_1}\ominus \mathcal{H}_{t}^{\x_n}) \oplus \ldots \oplus(\mathcal{H}_{t}^{\x_{n-1}}\ominus \mathcal{H}_{t}^{\x_n}) \oplus \mathcal{H}_{t}^{\x_n},
	\end{equation}
	from which we can conclude that
	\begin{equation} \label{Xdisj}
	(\mathcal{H}_{t}^{\x_i}\ominus \mathcal{H}_{t}^{\x_n})\cap (\mathcal{H}_{t}^{\x_j}\ominus \mathcal{H}_{t}^{\x_n})= \lbrace 0 \rbrace, \hspace{.2cm}  i,j \in \lbrace 1,2, \dots, n-1\rbrace, \hspace{.1cm} i\neq j.
	\end{equation}
	Combining $E_l[\mathcal{H}_{t+}^{\y_n}| \mathcal{H}_{t-}^{\y_n}]=(\mathcal{H}_{t}^{\x_1}\ominus \mathcal{H}_{t}^{\x_n})$ and $E_l[\mathcal{H}_{t+}^{\y_i}|\mathcal{H}_{t-}^{\y_i,\y_n}] \subseteq (\mathcal{H}_{t}^{\x_1}\ominus \mathcal{H}_{t}^{\x_i,\x_n})$ with \eqref{Xdisj} leads to \eqref{Thiv}.\\ \vspace{.25cm}
	
	\textbf{If} In view of Lemma \ref{Lem} from (\ref{GC1ni}*) the existence of a state-space representation of the form \eqref{Cor_nn} with the properties (\ref{Thi})-(\ref{Thiii}) follows. Besides, (\ref{GC1nii}*) ensures the process $\e$ to be the innovation process of $\y$. Therefore it remains to show that \eqref{Thiv} implies the minimality of such a representation. From \cite[Theorem 6.5.4]{LinPic15} we know that the linear stochastic system $(A,K,C,I)$ is minimal if and only if $(A,C)$ is observable, $(A,K)$ is reachable and the state-space is constructible. In Lemma \ref{Lem} observability and constructibility have already been proved thus we only need to show that reachability holds as well.
	
	According to \cite[Proposition 6.1.1]{LinPic15} $(A,K)$ is reachable if and only if $\x(t)$ is a base of $\mathcal{H}^{\x_n}_t$. Notice that since the representations (\ref{reprin}) are minimal we already know that $\mathcal{H}^{\x_i}_t\cap \mathcal{H}^{\x_n}_t=\lbrace0 \rbrace$. For reachability we need to see that $\dim(\mathcal{H}^{\x}_t)=\sum_{k=1}^n \dim(\mathcal{H}^{\x_i}_t)$.\footnote{Dimension means that the number of one-dimensional processes in a base.} To this end we will see that \eqref{Thiv} implies the relation
	\begin{equation} \label{AssX}
	(\mathcal{H}_{t}^{\x_i}\ominus \mathcal{H}_{t}^{\x_n})\cap (\mathcal{H}_{t}^{\x_j}\ominus \mathcal{H}_{t}^{\x_n})= \lbrace 0 \rbrace, \hspace{.2cm}  i,j \in \lbrace 1,2, \dots, n-1\rbrace, \hspace{.1cm} i\neq j,
	\end{equation}
	which enables $\mathcal{H}^{\x}_t$ to be written as in \eqref{Xpart}. Similarly as in the sufficient part of the proof it can be seen that the representation obtained from (\ref{GC1ni}*), by applying Lemma \ref{Lem}, leads to
	$$ E_l[\mathcal{H}^{\y_n}_{t+}|\mathcal{H}^{\y_n}_{t-}]=\mathcal{H}^{\x_n}_t, \hspace{.3cm} _lE[\mathcal{H}^{\y_i}_{t+}|\mathcal{H}_{t-}^{\y_i,\y_n}]\subseteq \mathcal{H}^{\x_i,\x_n}_t.$$
	As $O_i^{+}$ is of full row rank, we obtain from \eqref{XVobs} that $E_l[\mathcal{H}^{\y_i}_{t+}|\mathcal{H}_{t-}^{\y_i,\y_n}]+\mathcal{H}^{\x_n}_t \supseteq \mathcal{H}^{\x_i}_t$. Then, it follows that
	$$ E_l[\mathcal{H}^{\y_n}_{t+}|\mathcal{H}^{\y_n}_{t-}]=\mathcal{H}^{\x_n}_t, \hspace{.3cm}E_l[\mathcal{H}^{\y_i}_{t+}|\mathcal{H}_{t-}^{\y_i,\y_n}]+E_l[\mathcal{H}^{\y_n}_{t+}|\mathcal{H}^{\y_n}_{t-}] = \mathcal{H}^{\x_i,\x_n}_t.$$
	These relations together with \eqref{Thiv} lead to \eqref{AssX}. From $\mathcal{H}^{\x_i}_t\cap \mathcal{H}^{\x_n}_t=\lbrace0 \rbrace$, it follows that the dimension of $\mathcal{H}^{\x}_t$ is equal to $\sum_{i=1}^n \dim(\mathcal{H}^{\x_i}_t)$. This means that the representation is reachable and, conclusively, it is minimal.
\end{proof}

\begin{proof}[Proof of Lemma \ref{Algo34}]
	In the proof of Lemma \ref{Lem} we showed that if $\y_i$ does not Granger cause $\y_n$ for $i \in \lbrace 1, \ldots,n-1 \rbrace$ then the representations given by Theorem \ref{Kalman22} for $[\y_i^T,\y_n^T]^T$ can be combined into a coordinated form. Therefore, according to Lemma \ref{Algo12} step 1 in Algorithm \ref{algoCoordABCD}, \ref{algoCoordY} and step 2-3 in Algorithm \ref{algoCoordABCD} produces system \eqref{repr1n_Lem} in Lemma \ref{Lem}. In the proof of Theorem \ref{GCnn} we showed that this system exists and is in coordinated form with the properties \eqref{Thi}-\eqref{Thii}-\eqref{Thiii} in Theorem \ref{GCnn} if and only if the Granger non-causality conditions 1. and 2. hold. Since observability and constructibility is a consequence of the construction then the system is minimal if controllability holds. From the proof of Theorem \ref{GCnn} it turns out that minimality holds if and only if \eqref{Thiv} is fulfilled.
\end{proof}

\section*{Conclusion}
Granger non-causality between two processes ($\y_1,\y_2$) is usually inspected by looking at the coefficient matrices of an MA or AR representation of their joint process ($\y=[\y_1^T,\y_2^T]^T$). In this paper we showed another way for investigating Granger causality, namely by looking at whether a Kalman representation of $\y$, chosen in a certain way, is in block triangular form. As a result, in the presence of Granger non-causality this method provides a state-space model in block triangular form which can be calculated algorithmically. In fact, this approach turned out to be useful for constructing well structured state-space model characterizing a leader-follower interconnection structure specified by Granger non-causalities in $\y$. A class of sate-space models in a specific form, called coordinated form, was introduced analogue to the deterministic terminology. In our main result a state-space representation for $\y$ in coordinated form was proved to characterize Granger non-causality conditions of a coordinated (leader-follower) interconnection structure. This result is built on the results for the state-space characterization of a simple Granger non-causality, therefore it can be constructed algorithmically.

The state-space model in coordinated form characterizing Granger non-causality conditions can be calculated from data. However, its statistical behaviour need to be further studied in future research. Also, the generalization of this result either for non-linear models or for more complex interconnection structure remains for future work.

\bibliographystyle{plain}
\bibliography{jozsa}
\end{document}